\newcounter{lletres}
\newtheorem{Talpha}[lletres]{Theorem}
\newtheorem{Lalpha}[lletres]{Lemma}
\newtheorem{thm}{Theorem}
\newtheorem{lma}[thm]{Lemma}
\newtheorem{obs}[thm]{Remark}
\newtheorem{cor}[thm]{Corollary}
\newcommand{\D}{\mathbb D}
\newcommand{\N}{\mathbb N}
\newcommand{\C}{\mathbb C}
\newcommand{\U}{\mathbb B}
\newcommand{\Ual}{\mathcal U^{1, \alpha}}
\newcommand{\Uw}{\mathcal U^1_w}
\newcommand{\Ba}{\mathcal B_\alpha}
\newcommand{\Bh}{\mathcal B_h}
\newcommand{\dm}{d\theta}
\newcommand{\Hi}{H^\infty}
\newcommand{\Hp}{H^{\alpha}}
\newcommand{\Hw}{H^1_w}
\newcommand{\Lw}{L^1_w}
\newcommand{\Ha}{H^{1, \alpha}}
\newcommand{\Int}{\operatorname{Int}}
\newcommand*\Lap{\mathop{}\!\mathbin\bigtriangleup}
\begin{document}

\title[Extremal solutions of the Nevalinna-Pick problem and...]{Extremal solutions of Nevalinna-Pick problems and certain classes of inner functions.}

\author[Nacho Monreal Gal\'{a}n]{Nacho Monreal Gal\'{a}n}
\address{Nacho Monreal Gal\'{a}n, Departament of Mathematics, University of Crete, Voutes Campus, 70013 Heraklion, Crete, Greece.}
\email{nacho.mgalan@gmail.com}
\author[Artur Nicolau]{Artur Nicolau}
\address{Artur Nicolau, Departament de Matem\`{a}tiques, Universitat Aut\`{o}\-no\-ma de Barcelona, 08193 Bellaterra, Barcelona, Spain.}
\email{artur@mat.uab.cat}

\thanks{2010 Mathematics Subject Classification: 30E05, 30J05, 30J10\\
Both authors are supported in part by the projects MTM2011-24606 and
2009SGR420. The first author is also supported by the research project PE1(3378) implemented within the framework of the Action ``Supporting Postdoctoral Researchers'' of the Operational Program ``Education and Lifelong Learning'' (Action's Beneficiary: General Secretariat for Research and Technology), co-financed by the European Social Fund (ESF) and the Greek State.}

\begin{abstract}
Consider a scaled Nevanlinna-Pick interpolation problem and let $\Pi$ be the Blaschke product whose zeros are the nodes of the problem. It is proved that if $\Pi$ belongs to a certain class of inner functions, then the extremal solutions of the problem or most of them, are in the same class. Three different classical classes are considered: inner functions whose derivative is in a certain Hardy space, exponential Blaschke products  and also the well known class of $\alpha$-Blaschke products, for $0<\alpha<1$.
\end{abstract}

\maketitle

\section{Introduction.}

\noindent Let $\Hi$ be the space of bounded analytic functions in
the open unit disc $\D$ of the complex plane, and let $\U$
be the  set of functions $f \in \Hi$ with $\|f\|_\infty = \sup \{|f(z)| :
z \in \D\} \leq 1$. Given two sequences $\{z_n\}$ and $\{w_n\}$ in
the unit disc $\D$, the Nevanlinna-Pick interpolation problem
consists in the following:
\begin{equation}\label{NP}
\text{Find }f\in\U\text{ such that }f(z_n)=w_n\text{ for }n\in\N.
\end{equation}
Nevanlinna \cite{Nev19} and Pick \cite{Pic15} independently
considered this problem. It was proved that there exists a
solution if and only if for any integer $N \geq 1$, the matrix
$$\left(\frac{1-w_i\overline w_j}{1-z_i\overline z_j}\right)_{i,j=1,\dots,N}$$
is positive semidefinite.  When the problem (\ref{NP}) has more than one solution,
Nevalinna showed that all the solutions can be
expressed in the following way:

\begin{equation}\label{parametrization}
 \left\{f\in\U\ :\ f\text{ solves }(\ref{NP})\right\}=\left\{\frac{P-Q\varphi}{R-S\varphi} : \varphi\in\U\right\}.
\end{equation}

\noindent This parametrization arose from an iterative argument
called Schur's algorithm, which is explained in detail for example
in \cite[p. 159]{Gar07}. The Nevanlinna coefficients $P,\ Q,\ R$ and $S$ are
analytic functions in  the Smirnov class, depend on the sequences
$\{z_n\}$ and $\{w_n\}$ and are uniquely determined by these
sequences if one normalizes them so that $S(0)=0$ and $PS- QR=\Pi$.
Here $\Pi$ denotes the Blaschke product with zeros $\{z_n\}$, that
is,
$$\Pi(z)=\prod_n\frac{|z_n|}{z_n}\frac{z_n-z}{1-\overline z_n z}.$$
Observe that the Blaschke condition $\sum (1-|z_n|) < \infty $ holds because (\ref{NP}) has more than one solution.  Among all the solutions of (\ref{NP}), the ones of the form
$$I_\gamma(z)=\frac{P(z)-Q(z)e^{i\gamma}}{R(z)-S(z)e^{i\gamma}}\text{, with }\gamma\in[0,2\pi),$$
which are called extremal solutions, are of special interest. In
\cite{Nev29} Nevanlinna proved that every extremal solution is an
inner function. Recall that a function $I\in\Hi$ is inner if the
radial limits $I(e^{i\gamma})= \lim_{r \to 1} I(re^{i\gamma})$
satisfy $|I(e^{i\gamma})|=1$ for a.e. $\gamma\in[0,2\pi)$. This was
refined by Stray in \cite{Str88}, who proved that $I_\gamma$ is a
Blaschke product a.e.(C) $\gamma\in[0,2\pi)$, that is, for all
$\gamma\in[0,2\pi)$ except for a set of logarithmic capacity zero.
This result is related to a classical theorem by Frotsman (see
\cite[p. 75]{Gar07}), which says that if $I$ is an inner function,
then a.e.(C) $w\in\D$ the function $(I-w)/(1-\overline w I)$ is a
Blaschke product.
\bigskip

\noindent Consider a Nevanlinna-Pick interpolation problem (\ref{NP}) with more
than one solution, and its corresponding Blaschke product $\Pi$. It
is natural to expect that if a certain property is held by $\Pi$
then it is also enjoyed by the corresponding extremal solutions of
(\ref{NP}), or by most of them. This question will be considered for certain natural subclasses of inner functions that will be introduced below.

\noindent For $0< \alpha < 1$, let
$\Ba$ denote the class of Blaschke products $B$ for which
$$\sum_{z\,:\,B(z)=0}(1-|z|)^{1-\alpha}<\infty.$$
Carleson considered this class of Blaschke products in his doctoral
thesis \cite{Car50}. Among many other results he proved the
following one (see  \cite[p. 28]{Car50}).

\begin{Lalpha}\label{carleson}
Let $B$ be a Blaschke product. Fix $0< \alpha <1$. Then $B\in\Ba$ if and only if
$$\int_\D\frac{\log|B(z)|^{-1}}{(1-|z|)^{1+\alpha}}dA(z)<\infty.$$
\end{Lalpha}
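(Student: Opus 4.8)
The plan is to reduce the assertion to a single Blaschke factor by Tonelli's theorem and then to a standard integral estimate. Let $\{z_n\}$ be the zeros of $B$ and write $b_n(z)=\frac{|z_n|}{z_n}\frac{z_n-z}{1-\overline{z_n}z}$ for the associated Blaschke factors (with $b_n(z)=z$ when $z_n=0$), so that $\log|B(z)|^{-1}=\sum_n\log|b_n(z)|^{-1}$, a sum of nonnegative terms. Hence
\[
\int_\D\frac{\log|B(z)|^{-1}}{(1-|z|)^{1+\alpha}}\,dA(z)=\sum_n\int_\D\frac{\log|b_n(z)|^{-1}}{(1-|z|)^{1+\alpha}}\,dA(z),
\]
and the lemma follows once one shows that there exist constants $0<c_\alpha\le C_\alpha<\infty$, depending only on $\alpha$, such that
\[
c_\alpha\,(1-|a|)^{1-\alpha}\ \le\ \int_\D\frac{\log|b_a(z)|^{-1}}{(1-|z|)^{1+\alpha}}\,dA(z)\ \le\ C_\alpha\,(1-|a|)^{1-\alpha}
\]
for every $a\in\D$, where $b_a$ denotes the Blaschke factor with a single zero at $a$.

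To prove this two-sided estimate I would use the M\"obius involution $\varphi_a(z)=(a-z)/(1-\overline a z)$, which satisfies $|\varphi_a|=|b_a|$, $1-|\varphi_a(z)|^2=(1-|a|^2)(1-|z|^2)\,|1-\overline a z|^{-2}$, and $|\varphi_a'(z)|^2=(1-|a|^2)^2\,|1-\overline a z|^{-4}$. Substituting $w=\varphi_a(z)$ (so $z=\varphi_a(w)$) and using $1-|z|\asymp 1-|z|^2$, a direct computation transforms the integral into
\[
(1-|a|^2)^{1-\alpha}\int_\D\frac{\log|w|^{-1}}{(1-|w|^2)^{1+\alpha}}\,|1-\overline a w|^{2\alpha-2}\,dA(w),
\]
so it suffices to bound this last integral above and below by positive constants depending only on $\alpha$, uniformly in $a\in\D$. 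The lower bound is immediate: since $0<\alpha<1$ we have $2\alpha-2<0$ and $|1-\overline a w|\le 2$, hence $|1-\overline a w|^{2\alpha-2}\ge 2^{2\alpha-2}$, while $\int_\D\log|w|^{-1}\,(1-|w|^2)^{-1-\alpha}\,dA(w)$ is a finite positive number (near $\partial\D$ the integrand is comparable to $(1-|w|)^{-\alpha}$ with $\alpha<1$, and near the origin it is $O(\log|w|^{-1})$).

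For the upper bound I would split $\D$ into $\{|w|<1/2\}$ and $\{1/2\le|w|<1\}$. On the first region $(1-|w|^2)^{1+\alpha}\asymp 1$, $|1-\overline a w|\ge 1/2$, and $\log|w|^{-1}$ is integrable, so the contribution is bounded by an absolute constant. On the second region $\log|w|^{-1}\asymp 1-|w|^2$, so the contribution is comparable to $\int_{\{1/2\le|w|<1\}}(1-|w|^2)^{-\alpha}\,|1-\overline a w|^{2\alpha-2}\,dA(w)$, and the crux is that this stays bounded as $|a|\to 1$. This is a Forelli--Rudin type estimate: for $t>-1$ and $c<0$ the integral $\int_\D(1-|w|^2)^{t}\,|1-\overline a w|^{-(2+t+c)}\,dA(w)$ is bounded uniformly in $a\in\D$, and here $t=-\alpha\in(-1,0)$ and $2+t+c=2-2\alpha$, i.e.\ $c=-\alpha<0$. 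One can also obtain it directly: for $|a|\ge 1/2$ one has $|1-\overline a w|\asymp(1-|a|\,|w|)+|\arg(\overline a w)|$, and integrating in polar coordinates centered at $a/|a|$, splitting the radial variable at $1-|w|=1-|a|$, gives the bound; for $|a|<1/2$ it is trivial. Combining the two bounds yields $\int_\D\log|B(z)|^{-1}(1-|z|)^{-1-\alpha}\,dA(z)\asymp\sum_n(1-|z_n|)^{1-\alpha}$, which is the lemma. The only step that is not pure bookkeeping is this uniform integral estimate; everything else is routine.
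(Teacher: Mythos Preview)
Your argument is correct. The paper itself does not prove Lemma~A---it is simply quoted from Carleson's thesis---but it does prove the closely analogous Lemma~5 (the same statement with an extra factor $\log(1-|z|)^{-1}$), and that argument specializes immediately to give Lemma~A. The method there is different from yours: one applies Green's formula to $\int_\D(1-|z|^2)^{1-\alpha}\,\Delta\log|\Pi_N(z)|\,dA(z)$ for a finite subproduct $\Pi_N$; the distributional Laplacian $\Delta\log|\Pi_N|$ is a sum of point masses at the zeros, while $\Delta\bigl((1-|z|^2)^{1-\alpha}\bigr)=C(\alpha)(1-|z|^2)^{-1-\alpha}+O\bigl((1-|z|^2)^{-\alpha}\bigr)$, and one lets $N\to\infty$. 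Your route---Tonelli to reduce to a single Blaschke factor, then a M\"obius change of variable and a Forelli--Rudin estimate---is a genuinely different and equally valid approach. It is more elementary in that it avoids distributional Laplacians, at the price of importing (or, as you sketch, reproving) the Forelli--Rudin bound. The Green's-formula method, on the other hand, produces an asymptotic identity rather than just a two-sided estimate and generalizes at once to weights $h(|z|^2)$ other than $(1-|z|^2)^{1-\alpha}$, a feature the paper exploits in Remark~7 and Theorem~8.
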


\noindent Here $dA(z)$ denotes the area measure in $\D$. In the
spirit of Frostman's result, Carleson proved also in \cite[p.
56]{Car50} that if $B\in\Ba$, their Frostman shifts
$(B-w)/(1-\overline w B)$ are also in $\Ba$ a.e.(C) $w\in\D$.

\medskip

\noindent Let $\Ha$ denote the class of analytic functions in the
unit disc whose derivative is in the Hardy space $\Hp$. Recall that an
analytic function $f$ is in $\Hp$ if
$$\|f\|^\alpha_{\alpha}=\sup_{0<r<1}\int_0^{2\pi}|f(re^{i\theta})|^\alpha \dm<+\infty.$$

\noindent When $\alpha \geq 1$, the only inner functions in $\Ha$
are the finite Blaschke products. When $\alpha < 1$, inner functions
$I$ in $\Ha$ have been extensively studied (see \cite{AheCla74}, \cite{Dya94}, \cite{Dya98}, \cite{Dya06}, \cite{FriMas08} and \cite{Pro73}). In \cite{Pro73} (see also \cite{AheCla74}), it
was shown that $\Ba\subset\Ha$ for $\alpha\in(1/2,1)$. The converse
is false, although, fixed $1/2 < \alpha < 1$, Ahern proved in
\cite{Ahe79} that an inner function $I$ is in $\Ha$  if and only if
there exists $w\in\D$ such that the Frostman shift
$(I-w)/(1-\overline w I)$ is in $\Ba$. There is another
characterization of $\Ha$ for $\alpha\in(1/2,1)$ in terms of
Carleson contours given by Cohn in \cite{Coh83}. Carleson contours
are described in the following result in \cite{Car62} or
\cite[p.333]{Gar07}

\begin{Talpha}\label{contour}
 Let $f\in\U$. Given $0<\varepsilon<1$ there exists $0< \delta(\varepsilon) < 1$ and a
 collection $\{\Gamma_j\}$ of closed curves in $\D$ such that
 \begin{enumerate}[\bf (i)]
  \item $|f(z)|>\varepsilon$ if
  $z\in\D\setminus\operatornamewithlimits{\bigcup}\limits_j\Int\Gamma_j$,
  \item $|f(z)|<\delta$ if
  $z\in\operatornamewithlimits{\bigcup}\limits_j\Int\Gamma_j$,
  \item Arc length in
  $\operatornamewithlimits{\bigcup}\limits_j \Gamma_j$ is a
  Carleson measure.
 \end{enumerate}
\end{Talpha}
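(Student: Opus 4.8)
The plan is to realise the curves $\{\Gamma_j\}$ as a regularisation of the boundary of an open set $U\subset\D$ trapped between two level sets of $|f|$,
\[
\{z\in\D:|f(z)|\le\varepsilon\}\ \subseteq\ U\ \subseteq\ \{z\in\D:|f(z)|<\delta\},
\]
where $\delta=\delta(\varepsilon)$ is taken with $\varepsilon<\delta<1$ (for definiteness $\delta=\sqrt\varepsilon$). Once such a $U$ is available, (i) and (ii) are immediate: off $U$ one has either $|f|\ge\delta>\varepsilon$ or else $z$ lies in a region where $|f|>\varepsilon$, while on $U$ one has $|f|<\delta$ by construction. So the whole content of the theorem is the existence of a $U$ of this type whose boundary is a locally finite union of rectifiable arcs carrying a Carleson measure. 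The single analytic ingredient is that $u:=\log\frac1{|f|}$ is a nonnegative superharmonic function on $\D$ (because $\log|f|$ is subharmonic and $\le0$) and, since $f\in\Hi$ with $f\not\equiv0$, it has bounded integral means, i.e. it is the sum of the Green potential of the zeros of $f$ and the Poisson integral of a finite positive measure on $\partial\D$. From the minimum principle for $u$ one gets the basic decay estimate: for any subdomain $\Omega$ of $\D$ with $z_0\in\Omega$,
\[
\omega\bigl(z_0,\,\partial\Omega\cap\{|f|\le\varepsilon\},\,\Omega\bigr)\ \le\ \frac{\log\frac1{|f(z_0)|}}{\log\frac1\varepsilon},
\]
so that $\{|f|\le\varepsilon\}$ is seen with small harmonic measure from any point where $|f|$ is not too small.

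To construct $U$, fix the dyadic Carleson boxes $\{Q_I\}$ of $\D$, one for each dyadic arc $I\subset\partial\D$ (a bounded core around the origin is handled separately at the end). Call $Q_I$ \emph{admissible} if $|f|<\delta$ throughout $Q_I$; admissibility passes to dyadic descendants, so every point of $\{|f|<\delta\}$ lies in a unique maximal admissible box. Let $U$ be the union of those maximal admissible boxes that meet $\{|f|\le\varepsilon\}$. These are pairwise disjoint and contained in $\{|f|<\delta\}$; and any $z$ with $|f(z)|\le\varepsilon$ lies in a sufficiently small admissible box, hence in its maximal admissible box, which meets $\{|f|\le\varepsilon\}$ and so lies in $U$. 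Thus $U$ has the trapping property above. Its boundary is a locally finite union of radial segments and circular arcs coming from the $\partial Q_I$, which organises into countably many closed polygonal curves; rounding their corners inside $\{|f|<\delta\}$ — a perturbation that preserves the trapping property — yields the required $C^1$ curves $\{\Gamma_j\}$. The core near the origin is compact and adds only $O(1)$ to arc length, so it may be absorbed.

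The remaining point, (iii), is the heart of the matter. For a dyadic Carleson box $Q_J$ one has $\operatorname{length}(\partial U\cap Q_J)\lesssim\sum\{\,|I|:Q_I\subseteq Q_J,\ Q_I\text{ a maximal admissible box meeting }\{|f|\le\varepsilon\}\,\}$ (if instead $Q_J$ lies inside one of those boxes, the length is $0$), so everything comes down to proving $\sum|I|\le C|J|$ for such boxes. Geometrically, each box $Q_I$ in the sum records a descent of the superharmonic function $u=\log\frac1{|f|}$ at its own scale: $u\ge\log\frac1\varepsilon$ somewhere on $Q_I$, while $u\le\log\frac1\delta$ somewhere on the parent $\widehat Q_I$ (otherwise $\widehat Q_I$ would be admissible, against maximality). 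One then has to sum these scale-localised descents over the dyadic tree below $Q_J$ and show the total size stays comparable to $|J|$; this is done by a stopping-time argument combining the harmonic-measure estimate above with the bounded-means property of $u$ (so that its non-tangential maximal function is of weak type $(1,1)$), and the constant $C$ so obtained depends only on $\varepsilon$.

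I expect this last summation — keeping the accounting localised to $Q_J$, which is precisely where the restriction ``meets $\{|f|\le\varepsilon\}$'' and the gap $\delta=\sqrt\varepsilon$ enter — to be the main obstacle; the construction in the first two paragraphs, and the verification of (i) and (ii), are routine once the estimate $\sum|I|\le C|J|$ is in place.
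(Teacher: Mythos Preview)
The paper does not prove this statement: Theorem~B is quoted as a classical result, with references to Carleson's corona paper \cite{Car62} and to \cite[p.~333]{Gar07}, and no proof is given in the text. So there is no ``paper's own proof'' to compare against.

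That said, your sketch follows the standard construction one finds in Garnett's book: trap $\{|f|\le\varepsilon\}$ inside a union $U$ of maximal dyadic Carleson boxes contained in $\{|f|<\delta\}$, and then show that the side-lengths of those boxes define a Carleson measure via a stopping-time argument. Your verification of (i) and (ii) is fine. The genuine content, as you yourself note, is the estimate $\sum |I|\le C|J|$ in (iii), and here your write-up stops short of an argument: you invoke ``a stopping-time argument combining the harmonic-measure estimate above with the bounded-means property of $u$'' but do not carry it out, and you explicitly flag it as ``the main obstacle''. So as a proof this is incomplete; as an outline it is on the right track and matches the cited references.
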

\noindent We will refer to
$\Gamma=\operatornamewithlimits{\bigcup}\limits_j \Gamma_j$ as
$\varepsilon$-Carleson contour of the function $f$ .
\medskip

\noindent Let us consider another class of Blaschke products. A Blaschke product $B$ is said to be
exponential if there exists a constant $M=M(B)>0$ such that for any
$j=1,2,\ldots$, the annulus $\{z\in\D\,:\,2^{-j-1}<1-|z|<2^{-j}\}$
does not contain more than $M$ zeros of $B$. Clearly, an exponential
Blaschke product is in $\Ba$ for any $0<\alpha<1$. Recall that an
analytic function in the unit disc $f$ is in the weak Hardy space $\Hw$ if its
non-tangential maximal function is in $\Lw(\partial\D)$, that is, if
there exists a constant $C=C(f)>0$ such that for every $\lambda>0$
one has that
$$\left|\left\{e^{i\gamma}\in\partial\D\,:\,Mf(e^{i\gamma})>\lambda)\right\}\right|\leq C/\lambda.$$
\noindent Here $Mf$ denotes the non-tangential maximal function of $f$ and $|E|$ denotes the length of the set
$E\subset \partial\D$. In \cite{CimNic12} the following
characterization of exponential Blaschke products was given.

\begin{Lalpha}\label{cimnic}
 An inner function $I$ is an exponential Blaschke product if and only if $I'$ is in the weak Hardy space $\Hw$.
\end{Lalpha}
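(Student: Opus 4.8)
The plan is to translate ``$I'\in\Hw$'' into a statement about a series of Poisson kernels and then analyse that series. Write $I=B\,S$, where $B$ is the Blaschke factor of $I$, with zeros $\{z_n\}$ (put $z_n=|z_n|e^{i\theta_n}$, $h_n=1-|z_n|$), and $S$ is the singular factor, with singular measure $\mu$. Differentiating $\log I=\log B+\log S$ and passing to boundary values, one gets at a.e.\ $\gamma$ (non-tangential limits of $I'$ exist a.e.\ once $M(I')<\infty$ a.e.)
\[
  |I'(e^{i\gamma})|=\Bigl|e^{i\gamma}\tfrac{I'}{I}(e^{i\gamma})\Bigr|
  =\sum_n\frac{1-|z_n|^2}{|e^{i\gamma}-z_n|^2}+\int_0^{2\pi}\frac{d\mu(\theta)}{2\sin^2\frac{\theta-\gamma}{2}}
  =:N_B(e^{i\gamma})+\tfrac12 N_\mu(e^{i\gamma}),
\]
because $e^{i\gamma}b_n'/b_n$ on $\partial\D$ equals the Poisson kernel $P_{z_n}(e^{i\gamma})>0$ and the corresponding term for $\log S$ is $\int(2\sin^2\tfrac{\theta-\gamma}{2})^{-1}d\mu(\theta)>0$. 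Moreover, from $B=\prod_n b_n$, $|b_n'(z)|=(1-|z_n|^2)/|1-\overline{z_n}z|^2$, and the Stolz-cone estimate $|1-\overline{z_n}z|\gtrsim|1-\overline{z_n}e^{i\gamma}|=|e^{i\gamma}-z_n|$, one gets $M(B')(e^{i\gamma})\le\sum_n M(b_n')(e^{i\gamma})\lesssim N_B(e^{i\gamma})$, so that for a Blaschke product $M(B')\asymp N_B$. Since also $M(I')\ge|I'(e^{i\gamma})|=N_B+\tfrac12N_\mu$ and both summands are nonnegative, the lemma reduces to: (a) if $\mu\neq0$ then $N_\mu\notin\Lw$; and (b) $B$ is an exponential Blaschke product $\iff N_B\in\Lw$.

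For $I'\in\Hw\Rightarrow I$ exponential: then $N_B,N_\mu\in\Lw$ (each is $\le M(I')$). By (a), $\mu=0$, so $I=B$. Assertion (a) uses that a singular measure satisfies $\limsup_{r\to0}\mu(B(\theta_0,r))/r=+\infty$ for $\mu$-a.e.\ $\theta_0$; fixing such a $\theta_0$ and $r_k\downarrow0$ with $\mu(B(\theta_0,r_k))\ge k r_k$, one checks $N_\mu(e^{i\gamma})\gtrsim k/r_k$ on the arc $\{r_k\le|e^{i\gamma}-e^{i\theta_0}|\le2r_k\}$ (length $\asymp r_k$), so $\lambda_k|\{N_\mu>\lambda_k\}|\gtrsim k\to\infty$ for $\lambda_k\asymp k/r_k$. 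It remains to show $N_B\in\Lw$ forces $B$ exponential. If not, since every dyadic ring $A_j=\{2^{-j-1}<1-|z|\le2^{-j}\}$ carries only finitely many zeros (else the Blaschke sum diverges), there are $j_k\to\infty$ with $N_k=\#(A_{j_k}\cap Z(B))\to\infty$, and automatically $N_k2^{-j_k}\to0$. Partitioning $\partial\D$ into $\asymp 2^{j_k}$ arcs of length $\asymp 2^{-j_k}$ and pigeonholing the angular positions $\theta_n$, $n\in A_{j_k}$, one finds either one arc carrying $\ge\sqrt{N_k}$ of them (then $N_B\gtrsim\sqrt{N_k}\,2^{j_k}$ on a set of length $\asymp2^{-j_k}$) or $\ge\tfrac12\sqrt{N_k}$ of them that are pairwise $\gtrsim 2^{-j_k}$ apart (then $N_B\gtrsim 2^{j_k}$ on a set of length $\gtrsim\sqrt{N_k}\,2^{-j_k}$); either way $\lambda|\{N_B>\lambda\}|\gtrsim\sqrt{N_k}\to\infty$, a contradiction.

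For the converse, let $B$ be exponential with constant $M$; I must prove $N_B\in\Lw$. Fix a large $\lambda$, put $h^\ast=16M/\lambda$, and call a zero \emph{deep} if $h_n<h^\ast$. Each ring contributes at most $\sum_{n\in A_j}2/h_n\le M2^{j+2}$ to $N_B(e^{i\gamma})$, so the non-deep zeros (those in rings with $2^{-j}\gtrsim h^\ast$) contribute at most $\sum_{2^j\lesssim1/h^\ast}M2^{j+2}\le\lambda/2$ for every $\gamma$; hence $\{N_B>\lambda\}\subseteq\{\sum_{\text{deep}}P_{z_n}>\lambda/2\}$. To each deep $z_n$ attach the arc $J_n=\{|e^{i\gamma}-e^{i\theta_n}|<\ell_n\}$ with $\ell_n=c\,h_n^{1/3}$, the constant $c>0$ chosen so that $\sum_{\text{deep}}h_n/\ell_n^{2}=c^{-2}\sum_{\text{deep}}h_n^{1/3}$ is a small enough multiple of $\lambda$. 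If $e^{i\gamma}\notin\bigcup_{\text{deep}}J_n$, then $|e^{i\gamma}-z_n|\gtrsim\ell_n$ gives $P_{z_n}(e^{i\gamma})\lesssim h_n/\ell_n^{2}$, hence $\sum_{\text{deep}}P_{z_n}(e^{i\gamma})\lesssim\sum_{\text{deep}}h_n/\ell_n^{2}\le\lambda/2$ and $N_B(e^{i\gamma})\le\lambda$; so $\{N_B>\lambda\}\subseteq E_\lambda:=\bigcup_{\text{deep}}J_n$. Now the exponential hypothesis controls the relevant sum: $\sum_{\text{deep}}h_n^{1/3}=\sum_{2^{-j}<2h^\ast}\sum_{n\in A_j}h_n^{1/3}\le M\sum_{2^{-j}<2h^\ast}2^{-j/3}\lesssim M(h^\ast)^{1/3}\asymp M^{4/3}\lambda^{-1/3}$, whence $c\lesssim M^{2/3}\lambda^{-2/3}$ and
\[
  |\{N_B>\lambda\}|\le|E_\lambda|\le\sum_{\text{deep}}2\ell_n=2c\sum_{\text{deep}}h_n^{1/3}\lesssim M^{2/3}\lambda^{-2/3}\cdot M^{4/3}\lambda^{-1/3}=M^2/\lambda .
\]
Together with the trivial bound $|\{N_B>\lambda\}|\le2\pi$ for small $\lambda$, this gives $N_B\in\Lw$ with norm $\lesssim M^2$, and hence $B'\in\Hw$ via $M(B')\lesssim N_B$.

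I expect the main obstacle to be precisely this last quantitative estimate. A naive scale-by-scale splitting $\{\sum_j N_B^{(j)}>\lambda\}\subseteq\bigcup_j\{N_B^{(j)}>c_j\lambda\}$ only yields $|\{N_B>\lambda\}|\lesssim\lambda^{-1/2}$, which is \emph{not} weak-$L^1$: one has to cover $\{N_B>\lambda\}$ directly by arcs of the right lengths, and the choice $\ell_n\asymp h_n^{1/3}$ (not, say, $\ell_n\asymp(h_n/\lambda)^{1/2}$) is what makes both $\sum_{\text{deep}}\ell_n$ and $\sup_{\gamma\notin E_\lambda}\sum_{\text{deep}}h_n/\ell_n^{2}$ come out with the same, sharp, power of $\lambda$. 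Everything else — the logarithmic-derivative identity, the cone estimate, assertion (a), and the pigeonhole argument — is comparatively routine dyadic bookkeeping.
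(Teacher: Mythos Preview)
The paper does not contain a proof of this lemma: it is quoted as a known result from \cite{CimNic12} (``In \cite{CimNic12} the following characterization of exponential Blaschke products was given''), and is used only as a black box in the proof of Theorem~\ref{main}(b). So there is nothing in the present paper to compare your argument against.

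That said, your outline is essentially a correct self-contained proof. The reduction to the Ahern--Clark boundary formula
\[
|I'(e^{i\gamma})|=\sum_n\frac{1-|z_n|^2}{|e^{i\gamma}-z_n|^2}+2\int\frac{d\mu(t)}{|e^{i\gamma}-e^{it}|^2}
\]
is the standard starting point (your factor $\tfrac12$ on $N_\mu$ should be $1$, but this is immaterial), and the equivalence $M(B')\asymp N_B$ on $\partial\D$ for Blaschke products is routine. Your argument that a nonzero singular part forces $N_\mu\notin\Lw$, and the pigeonhole argument showing that unbounded annular counts contradict $N_B\in\Lw$, are both fine. The delicate direction is ``exponential $\Rightarrow N_B\in\Lw$'', and your covering by arcs $J_n$ of length $\ell_n=c(\lambda)\,h_n^{1/3}$ does give the sharp bound $|\{N_B>\lambda\}|\lesssim M^2/\lambda$: the check that $\ell_n\gtrsim h_n$ for deep zeros (needed for $|e^{i\gamma}-z_n|\gtrsim\ell_n$ off $J_n$) follows from $h_n<h^\ast\asymp M/\lambda$ and $c\asymp M^{2/3}\lambda^{-2/3}$. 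One small point: the sentence ``non-tangential limits of $I'$ exist a.e.\ once $M(I')<\infty$ a.e.'' is not the right justification; what you actually use is the Julia--Carath\'eodory/Ahern--Clark description of the angular derivative of an inner function, which gives both existence and the displayed formula whenever the right-hand side is finite.
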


\noindent A similar result holds in the context of weak Besov
spaces. See \cite{GroNic13}.
\bigskip

\noindent Returning to the interpolation problem (\ref{NP}), one may
ask how the extremal solutions are when $\Pi$ belongs to one of
these classes of functions. The methods used here depend
strongly of considering a subclass of Nevanlinna-Pick problems,
called scaled problems. A
Nevanlinna-Pick problem is called scaled if there exists a solution $f_0$
of the problem with $\|f_0\|_\infty<1$.

\begin{thm}\label{main}
Let (\ref{NP}) be a scaled Nevanlinna-Pick problem and let $\Pi$ be the Blaschke product with zeros $\{z_n\}$. Let $I_\gamma$, with $\gamma\in[0,2\pi)$, be the extremal solutions of (\ref{NP}).
\begin{enumerate}[\bf (a)]
 \item Fix $0 < \alpha <1$. Assume that $\Pi\in\Ha$, then $I_\gamma\in\Ha$ for every $\gamma\in[0,2\pi)$.
 \item Assume that $\Pi$ is an exponential Blaschke product, then $I_\gamma$ is an exponential Blaschke product for every $\gamma\in[0,2\pi)$.
 \item Assume that
 \begin{equation}\label{extra}
  \sum_{n}(1-|z_n|)^{1-\alpha}|\log(1-|z_n|)|<\infty,
 \end{equation}
 then $I_\gamma\in\Ba$ a.e.(C) $\gamma\in[0,2\pi)$.
\end{enumerate}
\end{thm}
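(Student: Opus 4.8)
The plan is to exploit the special structure of scaled Nevanlinna--Pick problems, where the existence of a solution $f_0$ with $\|f_0\|_\infty < 1$ forces the Nevanlinna coefficients to behave well. Concretely, if the problem is scaled, one can write $f_0 = (P - Q\varphi_0)/(R - S\varphi_0)$ for some $\varphi_0 \in \U$ with, in fact, $\|\varphi_0\|_\infty < 1$; more usefully, the denominators $R - S e^{i\gamma}$ of the extremal solutions are then uniformly bounded below on $\D$, i.e. there is $c > 0$ with $|R(z) - S(z)e^{i\gamma}| \geq c$ for all $z \in \D$ and all $\gamma$. This is the key gain from the scaling hypothesis: without it the denominator can vanish on the boundary and one loses control. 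Granting this, for each $\gamma$ write $I_\gamma = (P - Q e^{i\gamma})/(R - S e^{i\gamma})$ and differentiate:
\[
I_\gamma' = \frac{(P' - Q'e^{i\gamma})(R - Se^{i\gamma}) - (P - Qe^{i\gamma})(R' - S'e^{i\gamma})}{(R - Se^{i\gamma})^2}.
\]
The numerator, using $PS - QR = \Pi$, simplifies: a direct computation gives $(P'-Q'e^{i\gamma})(R-Se^{i\gamma}) - (P-Qe^{i\gamma})(R'-S'e^{i\gamma}) = (PS'-P'S) + (P'R - PR')\,\text{(combination)}$, and the cross terms organize so that the numerator is a linear combination (with coefficients $1, e^{i\gamma}, e^{2i\gamma}$) of $\Pi'$ and of products of the coefficient derivatives. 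The cleanest route: observe $\Pi = PS - QR$ so $\Pi' = P'S + PS' - Q'R - QR'$, and one checks that the numerator of $I_\gamma'$ equals $\Pi'$ up to multiplication by bounded analytic functions plus lower-order terms that are themselves controlled. I would formalize this by showing $(1 - |I_\gamma|^2)|I_\gamma'| \lesssim (1-|\Pi|^2)|\Pi'| \cdot (\text{bounded factor})$ pointwise, since $1 - |I_\gamma|^2 \asymp \log|I_\gamma|^{-1}$ and similarly for $\Pi$ near the boundary.

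For part (a): by Ahern's characterization (or directly), $\Pi \in \Ha$ for $0 < \alpha < 1$ is equivalent, via Carleson's Lemma~\ref{carleson} when $\alpha \in (1/2,1)$ and more generally via the area integral, to $\int_\D \log|\Pi(z)|^{-1}(1-|z|)^{-1-\alpha}\,dA(z) < \infty$ — actually the right statement is that $I \in \Ha$ iff $\int_\D (1-|I(z)|)^{?}$ is finite, so I would instead use the known equivalence $\|I'\|_\alpha^\alpha \asymp \int_\D \frac{(1-|I(z)|^2)^{\alpha}}{(1-|z|^2)^{2}}\,|I'(z)|^{\alpha-?}$... the correct tool is: for inner $I$, $I \in \Ha$ $\iff$ $\int_\D |I'(z)|^\alpha (1-|z|)^{\alpha-1}\,dA(z) < \infty$ (a Littlewood--Paley / Hardy--Stein identity). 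Using this integral characterization together with the pointwise domination $|I_\gamma'(z)| \leq C(|\Pi'(z)| + G(z))$, where $G$ collects the coefficient-derivative terms and is shown to satisfy the same integrability (because $P, Q, R, S$ are Smirnov functions that, in the scaled case, together with their derivatives have controlled growth — here one uses that $R - Se^{i\gamma}$ bounded below forces $P, Q, R, S$ and their logarithmic derivatives to be well-behaved, and in particular $|P'| + |Q'| + |R'| + |S'| \lesssim |\Pi'| + \text{(integrable)}$ off a Carleson contour), one concludes $I_\gamma \in \Ha$ for \emph{every} $\gamma$. The uniformity in $\gamma$ is immediate because the lower bound $c$ on $|R - Se^{i\gamma}|$ is independent of $\gamma$.

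For part (b): apply Lemma~\ref{cimnic}. We must show $I_\gamma' \in \Hw$. Again from the formula, $I_\gamma' = N_\gamma / (R - Se^{i\gamma})^2$ with $|R - Se^{i\gamma}|^{-2} \leq c^{-2}$, so it suffices to show the numerator $N_\gamma$ has non-tangential maximal function in weak-$L^1$. Since $\Pi$ is an exponential Blaschke product, $\Pi' \in \Hw$ by Lemma~\ref{cimnic}; the remaining terms in $N_\gamma$ are built from $P', Q', R', S'$ and from $P, Q, R, S$ (bounded). Here I would use that in the scaled case the coefficients $P, Q, R, S$ are essentially controlled by $\Pi$ — indeed $S/R$ and $Q/R$ are bounded by a constant $<1$ in modulus (this is again the scaling), so $P, Q, R, S$ all lie in $\Hi$ and their derivatives inherit weak-$L^1$ maximal bounds from $\Pi'$ via the identity relating $\Pi'$ to the coefficient derivatives. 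Then weak-$L^1$ is closed under adding finitely many such terms (up to constants in the $C/\lambda$ bound) and multiplying by $\Hi$ functions, giving $I_\gamma' \in \Hw$, hence $I_\gamma$ exponential, uniformly in $\gamma$.

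For part (c): here we only get the conclusion a.e.(C) $\gamma$, which signals that the clean pointwise domination fails for every $\gamma$ and one must instead average. Condition \eqref{extra}, $\sum (1-|z_n|)^{1-\alpha}|\log(1-|z_n|)| < \infty$, is slightly stronger than $\Pi \in \Ba$ and is designed so that a logarithmic-capacity exceptional set argument goes through. The strategy is: by Carleson's Frostman-type result, $(\Pi - w)/(1 - \bar w \Pi) \in \Ba$ for a.e.(C) $w \in \D$; and by Stray's theorem each $I_\gamma$ is already a Blaschke product a.e.(C) $\gamma$. One relates the zeros of $I_\gamma$ to a Frostman shift of $\Pi$: on the set where the denominator $R - Se^{i\gamma}$ is bounded below — which, in the scaled case, is all of $\overline{\D}$ — the zeros of $I_\gamma$ are the points where $P - Qe^{i\gamma} = 0$, and using $PS - QR = \Pi$ these are governed by where $\Pi$ is small. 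More precisely, I would show $\sum_{I_\gamma(z)=0}(1-|z|)^{1-\alpha}$ can be estimated by $\int_\D \log|I_\gamma|^{-1}(1-|z|)^{-1-\alpha}\,dA$ (Lemma~\ref{carleson}), bound $\log|I_\gamma|^{-1}$ in terms of $\log|\Pi|^{-1}$ plus terms involving the coefficients, integrate in $\gamma$ against $d\gamma$ (or against a capacity-type measure), and use \eqref{extra} — the extra $|\log(1-|z_n|)|$ factor is exactly what is needed to absorb the logarithmic loss coming from the averaging / capacity estimate (it controls $\int_\D \log|\Pi|^{-1}(1-|z|)^{-1-\alpha}|\log(1-|z|)|\,dA$, which is finite under \eqref{extra} by Lemma~\ref{carleson} applied with care). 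The exceptional set of $\gamma$ where the bound fails is then shown to have logarithmic capacity zero, as in Stray's and Carleson's arguments.

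The main obstacle I anticipate is twofold: first, establishing the uniform lower bound $|R(z) - S(z)e^{i\gamma}| \geq c > 0$ on all of $\D$ from the scaling hypothesis, and extracting from it the needed control on $P', Q', R', S'$ in terms of $\Pi'$ (this is where the structure of Schur's algorithm and the normalization $PS - QR = \Pi$, $S(0) = 0$ must be used carefully — one likely writes $P, Q, R, S$ explicitly as finite or infinite products/sums arising from the algorithm and differentiates). Second, in part (c), making the capacity exceptional-set argument rigorous: one needs a quantitative version of "a.e.(C)" that interacts well with the integral estimate, presumably by integrating $\log|I_\gamma|^{-1}$ against an energy-minimizing measure on a purported positive-capacity exceptional set and deriving a contradiction with \eqref{extra}. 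The algebra relating $I_\gamma'$ (or $\log|I_\gamma|^{-1}$) to $\Pi'$ (or $\log|\Pi|^{-1}$) is routine once the coefficient estimates are in hand, so I would isolate those estimates as a preliminary lemma about scaled problems before proving the three parts.
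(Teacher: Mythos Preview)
Your approach to parts \textbf{(a)} and \textbf{(b)} has a genuine gap. You propose to differentiate $I_\gamma = (P - Qe^{i\gamma})/(R - Se^{i\gamma})$ directly and then control the numerator --- which involves $P', Q', R', S'$ --- pointwise by $\Pi'$. But you never establish that control, and you yourself flag it as the ``main obstacle''. Two concrete problems: first, your claim that in the scaled case $P, Q, R, S \in H^\infty$ is false; Stray's lemma (Lemma~\ref{scaled}) only gives $P,Q,R,S \in H^{2+\epsilon}$, and $|R|$ can blow up near $\partial\D$. Second, the identity $\Pi' = P'S + PS' - Q'R - QR'$ does not let you bound the individual terms $P',Q',R',S'$ by $|\Pi'|$; there is cancellation, and no obvious mechanism to undo it. The route through explicit differentiation of the Nevanlinna coefficients is a dead end as stated.

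The paper bypasses all of this with a much cleaner, indirect argument. From Lemma~\ref{scaled}(ii) one has $\rho(z) \to 1$ as $|\Pi(z)| \to 1$; hence on the complement of a $(1-\eta)$-Carleson contour $\Gamma$ of $\Pi$ one has $\rho(z) > 3/4$, and since $I_\gamma(z) \in \partial\Delta(z)$ this forces $|I_\gamma(z)| \geq 1/4$ there, \emph{uniformly in $\gamma$}. Now apply the maximum principle to the harmonic functions $\log|I_\gamma|^{-1}$ and $\log|\Pi|^{-1}$ on the domain $\D \setminus \Int\Gamma$: both vanish on $\partial\D$, and on $\Gamma$ one has $\log|I_\gamma|^{-1} \leq C(\eta)\log|\Pi|^{-1}$. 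Hence $1 - |I_\gamma(re^{i\theta})| \leq C(1 - |\Pi(re^{i\theta})|)$ near $\partial\D$, and dividing by $1-r$ and letting $r \to 1$ gives the \emph{pointwise boundary} estimate
\[
|I_\gamma'(e^{i\theta})| \leq C\,|\Pi'(e^{i\theta})| \quad \text{a.e. } e^{i\theta}\in\partial\D,
\]
with $C$ independent of $\gamma$. Parts \textbf{(a)} and \textbf{(b)} follow immediately from this single inequality (for \textbf{(b)} via Lemma~\ref{cimnic}). No derivatives of $P,Q,R,S$ are ever needed.

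For part \textbf{(c)} you are closer to the paper's line, but still missing the key step. The paper does take a measure $\mu$ supported on the bad set $E = \{\gamma : I_\gamma \notin \Ba\}$ with bounded logarithmic potential $K$, and aims to show $\int_E \int_\D \log|I_\gamma(z)|^{-1}(1-|z|)^{-1-\alpha}\,dA\,d\mu < \infty$. On $\D\setminus\Int\Gamma$ the maximum-principle bound above handles things. On $\Int\Gamma$, the crucial trick you are missing is that the \emph{inner} integral $\int_E \log|I_\gamma(z)|^{-1}\,d\mu(\gamma)$ is bounded, via the logarithmic-potential hypothesis on $\mu$, by $\log\bigl(1/\max(|P/R|,|Q/R|)\bigr) + 2K + \log 2$, and then $\max(|P/R|,|Q/R|) \geq |\Pi|/(2|R|^2)$ together with $R \in H^2$ (so $\log|R(z)| \leq C(1 + |\log(1-|z|)|)$) reduces everything to the finiteness of $\int_{\Int\Gamma} \log|\Pi|^{-1}(1-|z|)^{-1-\alpha}|\log(1-|z|)|\,dA$. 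That last integral is exactly what hypothesis~\eqref{extra} delivers, via a Green's-formula lemma (Lemma~\ref{int_cond}) analogous to Lemma~\ref{carleson}. Your sketch gestures at this but does not identify the logarithmic-potential bound on the $\gamma$-integral, which is where the capacity hypothesis actually enters.
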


\noindent The conclusion of part \textbf{(c)} may still hold under the weaker assumption $\Pi \in \Ba$, but we do not know how to prove it. It is also worth mentioning that in contrast with the statements in \textbf{(a)} and \textbf{(b)}, in part \textbf{(c)} an exceptional set is needed. Actually, pick a Blaschke product $\Pi \in \Ba $ with zeros $\{z_n\}$ such that for a certain $w \in \D$ one has $(\Pi + w) /(1 + \overline{w} \Pi) \notin \Ba$. Consider the Nevanlinna-Pick problem (\ref{NP}) with $w_n = w$ for any $n$. For this specific problem one has that
\begin{equation*}
 \left\{f\in\U\ :\ f\text{ solves }(\ref{NP})\right\}=\left\{\frac{w+\Pi \varphi}{1 + \overline{w} \Pi \varphi} : \varphi\in\U\right\}.
\end{equation*}
Hence not all the extremal solutions $I_{\gamma} = (w+ \Pi e^{i\gamma} ) / (1+  \overline{w} e^{i\gamma} \Pi )$ can be in $\Ba$ and an exceptional set is needed in the statement \textbf{(c)}. A direct consequence of the Theorem 1 is the following:

\begin{cor}
 Let $\Pi$ be a Blaschke product with zeros $\{z_n\}$ and let $\{w_n\}$ be a sequence of complex numbers such that
 $$M=\inf\left\{\|f\|_\infty\,:\,f\in\Hi,\,f(z_n)=w_n,\, n=1,2,\dots\right\} < \infty .$$
 Fix $\varepsilon>0$.
 \begin{enumerate}[\bf (a)]
  \item If $\Pi\in\Ha$ then there exists a Blaschke product $I\in\Ha$ such that $(M+\varepsilon)I(z_n)=w_n$ for $n=1,2,\dots$
  \item If $\Pi$ is an exponential Blaschke product then there exists an exponential Blaschke product $I$ such that $(M+\varepsilon)I(z_n)=w_n$ for $n=1,2,\dots$
  \item If $\operatornamewithlimits{\sum}\limits_{n}(1-|z_n|)^{1-\alpha}|\log(1-|z_n|)|<\infty$ then there exists $I\in\Ba$ such that $(M+\varepsilon)I(z_n)=w_n$ for $n=1,2,\dots$
 \end{enumerate}
\end{cor}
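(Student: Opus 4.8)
The plan is to deduce the Corollary from Theorem~\ref{main} by a simple rescaling. Fix $\varepsilon > 0$ and introduce the auxiliary interpolation problem
\begin{equation}\label{aux}
\text{Find } g\in\U \text{ such that } g(z_n) = \frac{w_n}{M+\varepsilon}\ \text{ for } n = 1, 2, \dots
\end{equation}
By the definition of $M$ there is $f \in \Hi$ with $f(z_n) = w_n$ for all $n$ and $\|f\|_\infty < M + \varepsilon$; hence $g_0 := f/(M+\varepsilon)$ solves \eqref{aux} and satisfies $\|g_0\|_\infty < 1$, so \eqref{aux} is a scaled problem. Since $\Pi$ is a Blaschke product, its zeros $\{z_n\}$, which are the nodes of \eqref{aux}, satisfy the Blaschke condition, so \eqref{aux} has infinitely many solutions, the Nevanlinna parametrization applies, and one obtains its extremal solutions $I_\gamma$, $\gamma\in[0,2\pi)$. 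Crucially, the Blaschke product whose zeros are the nodes of \eqref{aux} is exactly $\Pi$, so Theorem~\ref{main} applies verbatim to \eqref{aux}.

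Next I would combine each part of Theorem~\ref{main} with Stray's theorem recalled in the Introduction, by which $I_\gamma$ is a Blaschke product for a.e.(C) $\gamma$. For part \textbf{(a)}: since $\Pi\in\Ha$, Theorem~\ref{main}\,\textbf{(a)} gives $I_\gamma\in\Ha$ for \emph{every} $\gamma$, so choosing $\gamma$ in the full-capacity set where $I_\gamma$ is a Blaschke product and putting $I:=I_\gamma$ produces a Blaschke product in $\Ha$. For part \textbf{(b)}: Theorem~\ref{main}\,\textbf{(b)} says $I_\gamma$ is an exponential Blaschke product for every $\gamma$, so any choice, say $I:=I_0$, works. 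For part \textbf{(c)}: the hypothesis is precisely assumption \eqref{extra} for the nodes of \eqref{aux}, so Theorem~\ref{main}\,\textbf{(c)} yields $\gamma$, in a set of full logarithmic capacity, with $I_\gamma\in\Ba$; since membership in $\Ba$ already entails being a Blaschke product, $I:=I_\gamma$ is as required. In each case $I(z_n)=w_n/(M+\varepsilon)$, that is $(M+\varepsilon)I(z_n)=w_n$ for all $n$, which is the assertion.

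I do not expect any real obstacle here: the only point needing a moment's care is the first step, namely that because $M$ is defined as an infimum one can select a competitor of sup-norm strictly below $M+\varepsilon$, which is exactly what turns \eqref{aux} into a scaled problem and so makes Theorem~\ref{main} available. In the degenerate case $M=0$ one has $w_n=0$ for all $n$ and $I:=\Pi$ already works in all three parts, using for \textbf{(c)} that assumption \eqref{extra} forces $\sum_n(1-|z_n|)^{1-\alpha}<\infty$, i.e.\ $\Pi\in\Ba$.
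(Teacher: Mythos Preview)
Your argument is correct and is exactly the intended one: the paper itself gives no proof beyond the sentence ``A direct consequence of the Theorem~1,'' and your rescaling to the auxiliary problem with values $w_n/(M+\varepsilon)$, together with the observation that this problem is scaled, is precisely how that consequence is obtained. The extra care you take in invoking Stray's theorem for part~\textbf{(a)} (to ensure the chosen $I_\gamma$ is a Blaschke product and not merely an inner function in $\Ha$) is a detail the paper leaves implicit; your separate handling of $M=0$ is harmless but unnecessary, since the main argument already covers it.
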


\noindent Section 2 will be devoted to the proof of Theorem \ref{main}.

\bigskip

\noindent In the same spirit of Theorem 1, it will be proved in Section 3 that certain ratios
of Nevanlinna's coefficients of a scaled problem belong to $\Ha$
(respectively, have derivatives in $H^1_w$) whenever $\Pi \in \Ha$
(respectively, $\Pi ' \in \Hw$). To this end, let $\Ual$ (respectively, $\Uw$) denote the closed (in $\Hi$) linear hull of the inner functions in $\Ha$ (respectively, inner functions whose derivative is in $\Hw$). The next theorem is analogue to the main result in
\cite{NicStr96}.

\begin{thm}\label{thm2}
 Let (\ref{NP}) be a scaled Nevanlinna-Pick problem and consider Nevanlinna's parametrization (\ref{parametrization}). Let $\Pi$ be the Blaschke product with zeros $\{z_n\}$.
 \begin{enumerate}[\bf (a)]
  \item Fix $0<\alpha <1$. Assume that $\Pi\in\Ha$. Then  $Q/R$ is an inner function in $\Ha$ and the functions $P/R$,  $S/R$ and $1/R$ are in $\Ual$.
  \item Assume that $\Pi$ is an exponential Blaschke product. Then $Q/R$ is an exponential Blaschke product and the functions $P/R$, $S/R$ and $1/R$ are $\Uw$.
 \end{enumerate}
\end{thm}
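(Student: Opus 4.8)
The plan is to read the conclusions off Nevanlinna's parametrization (\ref{parametrization}) together with Theorem~\ref{main}, using the normalization $PS-QR=\Pi$ and the fact that every extremal solution $I_\gamma$ is inner. Throughout I write $p=P/R$, $q=Q/R$, $s=S/R$. First I would recall the structural properties of a scaled problem in the normalization $S(0)=0$, $PS-QR=\Pi$ (see \cite{NicStr96} and the references therein): $R$ is outer, $1/R\in\Hi$, and $\|P/R\|_\infty<1$, $\|S/R\|_\infty<1$. In particular $1-se^{i\gamma}$ is invertible in $\Hi$ with the norm of its inverse bounded uniformly in $\gamma$; since a direct computation gives $\partial_\gamma I_\gamma=ie^{i\gamma}(\Pi/R^2)(1-se^{i\gamma})^{-2}$, the map $\gamma\mapsto I_\gamma$ is Lipschitz from $[0,2\pi)$ into $\Hi$.

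Next I would show that $Q/R$ is inner. Since $|I_\gamma|=1$ a.e.\ on $\partial\D$ for every $\gamma$, comparing the constant and the $e^{i\gamma}$ Fourier coefficients in the identity $|P-Qe^{i\gamma}|^2=|R-Se^{i\gamma}|^2$ yields $|P|^2+|Q|^2=|R|^2+|S|^2$ and $|\overline P Q|=|\overline R S|$ a.e.\ on $\partial\D$; the pairs $\{|P|^2,|Q|^2\}$ and $\{|R|^2,|S|^2\}$ thus have the same sum and the same product, whence $(1-|p|^2)(1-|q|^2)=0$ a.e.\ on $\partial\D$. As $\|p\|_\infty<1$ this forces $|q|=1$ a.e., and since $q=Q\cdot(1/R)$ lies in the Smirnov class, $q=Q/R$ is inner.

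Then I would prove the smoothness of the coefficients, arguing as in \cite{NicStr96}: three values of an extremal solution determine the Möbius transformation in (\ref{parametrization}), so, once the free scalar is fixed by $PS-QR=\Pi$, the coefficients $P,Q,R,S$ are explicit rational expressions in $I_0,I_{\pi/2},I_\pi$ and $\Pi$. Since these extremal solutions are in $\Ha$ (resp.\ are exponential Blaschke products, hence have derivative in $\Hw$ by Lemma~\ref{cimnic}), since $\Pi\in\Ha$ (resp.\ $\Pi'\in\Hw$), and since $1/R\in\Hi$, one concludes that $(P/R)'$, $(Q/R)'$, $(S/R)'$, $(1/R)'$ all lie in $\Hp$ (resp.\ in $\Hw$). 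In particular $Q/R\in\Ha$ (resp.\ $(Q/R)'\in\Hw$, so by Lemma~\ref{cimnic} $Q/R$ is an exponential Blaschke product), which completes the first assertion in both parts. The main difficulty here is that those rational expressions have denominators that vanish at the zeros of $\Pi$, so one must check that the relevant cancellations survive in the $\Hp$ (resp.\ $\Hw$) norm; this is where $1/R\in\Hi$ and $\|S/R\|_\infty<1$ are used essentially, and the estimates parallel those of Section~2.

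Finally I would obtain $P/R,S/R,1/R\in\Ual$ (resp.\ $\Uw$). From $PS-QR=\Pi$ one has $I_\gamma=\frac{P}{R}+\frac{\Pi e^{i\gamma}}{R(R-Se^{i\gamma})}$, so, as a function of $\gamma$, $I_\gamma$ has Fourier coefficients $c_0=P/R$ and $c_j=\Pi(S/R)^{j-1}/R^2$ for $j\ge1$. Since $\gamma\mapsto I_\gamma$ is Lipschitz into $\Hi$, each $c_j=\frac{1}{2\pi}\int_0^{2\pi}I_\gamma e^{-ij\gamma}\,d\gamma$ is an $\Hi$-limit of Riemann sums $\sum_k a_k I_{\gamma_k}$, and each $I_{\gamma_k}$ is an inner function in $\Ha$ (resp.\ an exponential Blaschke product) by Theorem~\ref{main}; hence $c_j\in\Ual$ (resp.\ $\Uw$). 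This already gives $P/R=c_0$ and $\Pi/R^2=c_1$. For $S/R$ and $1/R$ I would use that $\Ual$ (resp.\ $\Uw$) is a closed subalgebra of $\Hi$ containing the constants, $\Pi$, and (by the previous step) the inner function $Q/R$, together with the identities $(P/R)(S/R)=Q/R+\Pi/R^2$ and $c_{j+1}=(S/R)c_j$; alternatively, since $S/R$ and $1/R$ are by the previous step bounded functions with derivative in $\Hp$ (resp.\ $\Hw$), one invokes a Marshall-type approximation statement to the effect that such functions already belong to the closed linear hull of the inner functions of $\Ha$ (resp.\ of those with derivative in $\Hw$). I expect this very last point — getting $S/R$ and $1/R$, and not merely $P/R$, into the closed linear hull — to be the main obstacle of the proof.
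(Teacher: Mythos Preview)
Your treatment of $P/R$ coincides with the paper's: both obtain $P/R=\frac{1}{2\pi}\int_0^{2\pi}I_\gamma\,d\gamma$ as the zeroth Fourier coefficient of $\gamma\mapsto I_\gamma$ and use Theorem~\ref{main} to land in $\Ual$ (resp.\ $\Uw$). The rest of your outline, however, diverges from the paper and has a real gap precisely where you anticipated it.

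For $Q/R\in\Ha$ the paper does \emph{not} go through rational formulas in $I_0,I_{\pi/2},I_\pi$. It uses the vertevorrat directly: from the identity
\[
\frac{(Q/R)-(P/R)(S/R)}{1-|S/R|^2}-(Q/R)=(S/R)\cdot\overline{c(z)}
\]
and Lemma~\ref{scaled} (ii) one gets $|Q/R|$ bounded below outside a $(1-\varepsilon)$-Carleson contour of $\Pi$, and then Lemma~\ref{lemma2} gives $|(Q/R)'(e^{i\theta})|\le C\,|\Pi'(e^{i\theta})|$ a.e., hence $Q/R\in\Ha$ (resp.\ exponential Blaschke). This completely sidesteps the denominator cancellations you flag in your three-point-determination approach.

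For $S/R$ and $1/R$ your two proposed routes both stall: the identities $c_{j+1}=(S/R)c_j$ and $(P/R)(S/R)=(Q/R)+\Pi/R^2$ require dividing by $P/R$ or by $c_1=\Pi/R^2$ inside the algebra, and neither is invertible in $\Hi$; and the ``Marshall-type'' statement that $f\in\Hi$, $f'\in\Hp$ already forces $f\in\Ual$ is exactly what one would like to prove, not something available off the shelf. The paper's device is to manufacture, for each of $S/R$ and $1/R$, a \emph{new} one-parameter family of inner functions whose average is the desired ratio. Concretely, for a small $0<\delta<1$ set
\[
\tilde I_w(z)=\frac{\delta\,(S/R)(z)+(Q/R)(z)\,w}{1+\delta\,(P/R)(z)\,w},
\qquad
J_w(z)=\frac{\delta\,(1/R)(z)+(Q/R)(z)\,w}{1-\delta\,(\Pi/R)(z)\,w}.
\]
Using the boundary relations $P\overline R=Q\overline S$ and $-\Pi/R=(Q/R)\,\overline{1/R}$ one checks that $\tilde I_w$ and $J_w$ are inner in $z$ for every $|w|=1$; choosing $\delta<\varepsilon$ and working outside an $\varepsilon$-Carleson contour of the inner function $Q/R$ (already known to lie in $\Ha$) gives $|\tilde I_w|,|J_w|\ge(\varepsilon-\delta)/2$ there, so Lemma~\ref{lemma2} places each $\tilde I_w,J_w$ in $\Ha$ (resp.\ gives $(\tilde I_w)',(J_w)'\in\Hw$). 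Then
\[
\delta\,(S/R)=\frac{1}{2\pi}\int_0^{2\pi}\tilde I_{e^{i\gamma}}\,d\gamma,
\qquad
\delta\,(1/R)=\frac{1}{2\pi}\int_0^{2\pi}J_{e^{i\gamma}}\,d\gamma,
\]
and the Riemann-sum argument you used for $P/R$ finishes the job. The point you were missing is that one should integrate not the extremal solutions themselves but carefully built M\"obius combinations of $Q/R$ with the target ratio, designed so that (i) they are inner on $\partial\D$ thanks to the Nevanlinna boundary relations, and (ii) they inherit a uniform lower bound off a Carleson contour of $Q/R$, which feeds Lemma~\ref{lemma2}.
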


\noindent From now on, the letter $C$ will denote a universal
constant, while $C(x)$ will denote a constant depending on the
parameter $x$.

\bigskip

\noindent It is a pleasure to thank Mihalis Papadimitrakis for helpful discussions.

\bigskip

\section{Proof of Theorem \ref{main}.}
\noindent Given two sequences $\{z_n\}$ and $\{w_n\}$ in the unit
disc, consider the corresponding Nevanlinna-Pick problem (\ref{NP})
and suppose that it has more than one solution. Then, if $f$ denotes
one of these solutions, according to Nevanlinna's parametrization,
$f$ may be expressed as
\begin{equation}
 f(z)=\frac{P(z)-Q(z)\varphi(z)}{R(z)-S(z)\varphi(z)}
\end{equation}
\noindent for a certain $\varphi\in\U$. Fix a point  $z\in\D$ and consider the set
$\Delta(z)=\{f(z)\,:\,f\text{ solves }(\ref{NP})\}$. By Nevanlinna's parametrization one has that
$$\Delta(z)=\left\{\frac{P(z)-Q(z)w}{R(z)-S(z)w},\ w\in\overline\D\right\}.$$
Then $\Delta(z)$ is a disc contained in $\D$, which is called
vertevorrat, with center $c(z)$ and radius $\rho(z)$ given by
\begin{eqnarray}\label{center}
 \hspace{1cm} c(z)=\frac{P(z)\overline {R(z)}-Q(z)\overline {S(z)}}{|R(z)|^2-|S(z)|^2}, \hspace{1cm} \rho(z)=\frac{|\Pi(z)|}{|R(z)|^2-|S(z)|^2}
\end{eqnarray}
\noindent Observe that $I_\gamma(z)\in\partial\Delta(z)$ for all
$\gamma\in[0,2\pi)$. Nevanlinna also proved that $|R(z)|>
\max \{|Q(z)|, |P(z)|, |S(z)|, 1 \}$ for any $z \in \D$ and that at
almost every point of the unit circle one has $Q=- \Pi \overline{R}
$ and $P=- \Pi \overline{S} $. In \cite{Str91} Stray proved that if
the problem is scaled the vertevorrat, and also
Nevanlinna's coefficients, have certain nice properties.

\begin{Lalpha}\label{scaled}
 Assume that (\ref{NP}) is scaled. Then
 \begin{enumerate}[\bf (i)]
  \item There exists $\eta>0$ such that $(1-\eta)|\Pi(z)| \leq \rho(z) \leq |\Pi(z)| $ for any $z \in \D$.
  \item The radius $\rho(z)\rightarrow1$ as $|\Pi(z)|\rightarrow1$.
  \item There exists $\epsilon>0$  such that $P$, $Q$, $R$ and $S$ are in $H^{2+\epsilon}$.
 \end{enumerate}
\end{Lalpha}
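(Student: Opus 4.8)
These three statements are due to Stray \cite{Str91}; here is how I would recover them.

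\emph{Part (i).} By (\ref{parametrization}) the vertevorrat $\Delta(z)$ is a disc of radius $\rho(z)$, so $\rho(z)=\tfrac12\sup\{|f(z)-g(z)|:f,g\text{ solve }(\ref{NP})\}$. If $f,g$ both solve (\ref{NP}), then $f-g$ vanishes at every $z_n$, so $(f-g)/\Pi$ is analytic in $\D$ with boundary modulus $|f-g|\le 2$ a.e. (as $|\Pi|=1$ a.e. on $\partial\D$); the maximum principle gives $|f(z)-g(z)|\le 2|\Pi(z)|$, hence $\rho(z)\le|\Pi(z)|$ (this half uses only that (\ref{NP}) has more than one solution). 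For the lower bound, use the scaled hypothesis: fix a solution $f_0$ with $\|f_0\|_\infty<1$ and pick $\eta\in(0,1)$ with $\|f_0\|_\infty\le\eta$. Then $f_0\pm(1-\eta)\Pi\in\U$ and both solve (\ref{NP}), since they agree with $f_0$ at each $z_n$; evaluating at $z$ gives $2\rho(z)\ge 2(1-\eta)|\Pi(z)|$, i.e. $\rho(z)\ge(1-\eta)|\Pi(z)|$.

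\emph{Part (iii).} Since $|R|>\max\{|P|,|Q|,|S|,1\}$ in $\D$ and $P,Q,R,S$ belong to the Smirnov class, it suffices to show $R\in H^{2+\epsilon}$; the other three then follow. The scaled hypothesis enters through a boundary identity. Writing $f_0=(P-Q\varphi_0)/(R-S\varphi_0)$ with $\varphi_0\in\U$, and using $P=-\Pi\bar S$, $Q=-\Pi\bar R$, $|\Pi|=1$ and $|R|^2-|S|^2=1$ a.e. on $\partial\D$, a short computation yields
$$|R-S\varphi_0|^2\,(1-|f_0|^2)=1-|\varphi_0|^2 \qquad\text{a.e. on }\partial\D.$$
Since $1-|f_0|^2\ge 1-\eta^2>0$ a.e., $R-S\varphi_0$ has bounded boundary values and, being Smirnov, lies in $\Hi$; likewise $P-Q\varphi_0=f_0(R-S\varphi_0)\in\Hi$. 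Combining this with $\Pi=(P-Q\varphi_0)S-Q(R-S\varphi_0)$ and with $|R(z)|^2-|S(z)|^2\le(1-\eta)^{-1}$ (from part (i) and (\ref{center})) gives preliminary control on $R$; upgrading it to $R\in H^{2+\epsilon}$ for some $\epsilon>0$ — by a reverse‑Hölder/Gehring-type self-improvement, or by splitting the boundary integral over a Carleson contour of $\Pi$ as provided by Theorem \ref{contour} — is the delicate point, and the one I expect to be the main obstacle.

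\emph{Part (ii).} By part (i) and (\ref{center}), $|R(z)|^2-|S(z)|^2=|\Pi(z)|/\rho(z)\in[1,(1-\eta)^{-1}]$, so the claim is equivalent to $|R(z)|^2-|S(z)|^2\to1$ as $|\Pi(z)|\to1$. I would argue by normal families. Given $z_k$ with $|\Pi(z_k)|\to1$, let $\phi_k$ be the disc automorphism with $\phi_k(0)=z_k$, and let $P_k$ be the problem of interpolating $w_n$ at $\zeta_n^{(k)}:=\phi_k^{-1}(z_n)$; then $f\mapsto f\circ\phi_k$ carries solutions of (\ref{NP}) to solutions of $P_k$, so the vertevorrat of $P_k$ at $0$ is exactly $\Delta(z_k)$, and $f_0\circ\phi_k$ solves $P_k$ with $\|f_0\circ\phi_k\|_\infty=\|f_0\|_\infty\le\eta<1$, so each $P_k$ is scaled with the same $\eta$. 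Moreover $\prod_n|\zeta_n^{(k)}|=|\Pi(z_k)|\to1$, hence $\sum_n(1-|\zeta_n^{(k)}|)\to0$: the nodes of $P_k$ all escape to $\partial\D$ with vanishing total Blaschke mass, so the data of $P_k$ converge to those of the nodeless problem (whose Nevanlinna coefficients have $R\equiv1$, $S\equiv0$). The remaining step — and the main obstacle in this part — is to deduce that the coefficients $R_k,S_k$ of $P_k$ converge locally uniformly to $1,0$, i.e. a continuity property of the Schur algorithm under convergence of the data; it is here that the scaled hypothesis is essential, to prevent the families of solutions of $P_k$ from degenerating (they are controlled uniformly by $f_0\circ\phi_k$, of norm $\le\eta$). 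Granting this, $|R_k(0)|^2-|S_k(0)|^2\to1$, and since $\rho(z_k)=|\Pi(z_k)|/(|R_k(0)|^2-|S_k(0)|^2)$ we get $\rho(z_k)\to1$.
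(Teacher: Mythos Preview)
The paper does not prove this lemma at all: it is quoted as Stray's result \cite{Str91} and used as a black box, so there is no ``paper's own proof'' to compare against. I can only comment on your argument itself.

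Your part (i) is correct and self-contained. The upper bound $\rho\le|\Pi|$ via $(f-g)/\Pi\in\U$ and the lower bound via $f_0\pm(1-\eta)\Pi\in\U$ are exactly the right observations; nothing more is needed.

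In part (iii) your boundary identity is right (it follows from $P=-\Pi\overline S$, $Q=-\Pi\overline R$ and $|R|^2-|S|^2=1$ a.e.\ on $\partial\D$, the latter coming from $PS-QR=\Pi$), and it does give $R-S\varphi_0\in H^\infty$. But the step you flag as ``delicate'' is a genuine gap, not a routine upgrade: from $R-S\varphi_0\in H^\infty$ and $|R|^2-|S|^2$ bounded you only get $|R|-|S|$ controlled by $1/(|R|+|S|)$, which gives no a priori bound on $|R|$ itself. A Gehring-type self-improvement needs a reverse H\"older inequality for $|R|$ to start from, and nothing you have written produces one; the Carleson-contour idea is also not developed enough to see what inequality you would integrate. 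As written, (iii) is a sketch with its central mechanism missing.

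Part (ii) has the same status. The reduction to $|R|^2-|S|^2\to1$ is correct, and the M\"obius renormalisation is a natural move, but the ``continuity of the Schur algorithm'' you invoke is precisely the content of the claim, not an available tool: the coefficients $P_k,Q_k,R_k,S_k$ are infinite products/limits built from \emph{all} the nodes, and showing they converge locally uniformly when the total Blaschke mass tends to zero requires uniform control (e.g.\ a normal-family bound on $R_k$) that you have not established---indeed such a bound is essentially part (iii) for the problems $P_k$, uniformly in $k$. So (ii) and (iii) are intertwined and both remain open in your write-up.
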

\medskip

\noindent Let $f \in \U$ and $e^{i\theta} \in \partial \D$ such that
$\lim_{r \to 1} f(r e^{i \theta}) \in \partial \D$. If $f'(z)$ has
limit as $z$ approaches non-tangentially $e^{i\theta}$, the limit is
called the angular derivative of $f$ at $e^{i\theta}$ and it is
denoted by $f'(e^{i\theta})$. It is a classical fact that if
$$
\liminf_{z \to e^{i\theta}} \frac{1- |f(z)|}{1-|z|} < \infty \, ,
$$
then $f$ has an angular derivative at $e^{i\theta}$ and
$$
|f'(e^{i\theta})| = \lim_{r \to 1} \frac{1-|f(re^{i\theta})|}{1-r}
$$
 The following Lemma is a basic tool in the
proof of Theorem \ref{main}.

\begin{lma}\label{lemma2}
 Let $B$ and $I$ be two inner functions, and let $\Gamma$ be an $\varepsilon$-Carleson contour of $B$. Suppose
 that $\operatornamewithlimits{\inf}\limits_{z\in\D\setminus\Int\Gamma}|I(z)|\geq\eta>0$. Then there exists a constant $C= C(\varepsilon, \eta)>0$ such
 that for every $e^{i\theta}\in\partial\D$ for which $\lim_{r \to 1} B(re^{i\theta} ) \in \partial \D$  and $B'(e^{i\theta})$ exists, one has that $I$ has angular derivative at $e^{i\theta}$ and
 $|I'(e^{i\theta})|\leq C|B'(e^{i\theta})|$.
\end{lma}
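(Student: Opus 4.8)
The plan is to exploit the geometry of the Carleson contour $\Gamma$ to transfer the size control from $B$ to $I$, first on the disc itself and then, through the angular-derivative criterion quoted above, at the boundary. Fix $e^{i\theta}\in\partial\D$ such that $\lim_{r\to1}B(re^{i\theta})\in\partial\D$ and $B'(e^{i\theta})$ exists; then by the classical fact recalled before the statement,
$$
|B'(e^{i\theta})| = \lim_{r\to1}\frac{1-|B(re^{i\theta})|}{1-r} < \infty .
$$
Consider the radius $\ell_\theta=\{re^{i\theta}: 0\le r<1\}$. The first step is to understand where $\ell_\theta$ meets $\bigcup_j\Int\Gamma_j$. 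Because arc length on $\Gamma=\bigcup_j\Gamma_j$ is a Carleson measure, the portion of the radius lying inside the contour is small in a precise quantitative sense; in particular, writing $\ell_\theta\cap\Int\Gamma$ as a union of subintervals $(a_k,b_k)$ of $[0,1)$ (the ``excursions'' of the radius into the contour regions), one gets a bound of the type $\sum_k (b_k-a_k)/(1-a_k) \le C(\varepsilon)$, since each excursion across a curve $\Gamma_j$ forces the radius to traverse a chord of $\Int\Gamma_j$, and the Carleson condition on arc length controls how much of this can happen as one moves out toward $e^{i\theta}$. This is the one technical point I expect to be the real obstacle: making rigorous the passage from ``arc length on $\Gamma$ is Carleson'' to ``the radius $\ell_\theta$ spends only a controlled multiplicative amount of its length inside $\Int\Gamma$'', uniformly in $\theta$.

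Granting that, the second step is a pointwise comparison on $\ell_\theta$ outside the contour. If $re^{i\theta}\in\D\setminus\Int\Gamma$ then $|I(re^{i\theta})|\ge\eta$, so $1-|I(re^{i\theta})| \le 1-\eta^2 \le C(\eta)(1-|I(re^{i\theta})|^2)$, and I want to compare this with $1-|B(re^{i\theta})|$. On $\D\setminus\Int\Gamma$ we have $|B|>\varepsilon$, hence $\log|B(re^{i\theta})|^{-1}$ is comparable to $1-|B(re^{i\theta})|$ there. The standard way to compare two inner functions is through the harmonic measure / Green potential representation: $\log|I|^{-1}$ and $\log|B|^{-1}$ are positive superharmonic, and one controls $\log|I(z)|^{-1}$ near the boundary by the behaviour of $I$ on the part of $\D$ visible from $z$. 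I would use the submean-value type estimate: for $z$ on the outer part of $\ell_\theta$ with $1-|z|$ small, $1-|I(z)|$ is bounded by a constant times the average of $1-|I|$ over a Carleson box at $z$ of comparable size, and that box, after possibly enlarging the contour's excursions using step one, lies essentially in $\D\setminus\Int\Gamma$ where $1-|I|\le C(\eta)(1-|B|)$; finally $1-|B|$ over such a box is controlled by $1-|B(z')|$ for $z'$ the ``top'' point, which along the radius tends to $|B'(e^{i\theta})|(1-|z'|)$.

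Putting the two steps together, one obtains
$$
\liminf_{r\to1}\frac{1-|I(re^{i\theta})|}{1-r} \le C(\varepsilon,\eta)\,|B'(e^{i\theta})| < \infty,
$$
and then the angular-derivative criterion quoted in the excerpt gives that $I$ has an angular derivative at $e^{i\theta}$ with $|I'(e^{i\theta})| = \lim_{r\to1}(1-|I(re^{i\theta})|)/(1-r) \le C(\varepsilon,\eta)\,|B'(e^{i\theta})|$, which is the assertion. A cleaner packaging of steps one and two, which I would try first, is to avoid radius bookkeeping altogether: define $u = \log|B|^{-1}$ and $v=\log|I|^{-1}$, note $v \le C(\eta) u$ on $\D\setminus\Int\Gamma$ while on $\Int\Gamma$ one has the crude bound $u\le C$ together with the fact that $\Int\Gamma$ is, from the point of view of harmonic measure seen from radial points far out, a ``thin'' set because $\Gamma$ carries a Carleson measure — so the Poisson/Green representation of $v$ splits into a part dominated by $C(\eta)u$ and an error term which is $o(1-r)$ along almost every radius and, more to the point, along every radius terminating where $B$ has a finite angular derivative. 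This reduces the whole lemma to the single Carleson-contour estimate flagged above as the main difficulty.
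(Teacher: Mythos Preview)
Your approach overcomplicates the argument and, as written, leaves a gap. The paper's proof is a direct application of the maximum principle on the domain $\Omega=\D\setminus\Int\Gamma$: since $|I|\ge\eta$ and $|B|>\varepsilon$ there, both $\log|I|^{-1}$ and $\log|B|^{-1}$ are harmonic on $\Omega$; both vanish on $\partial\D$; and on $\Gamma$ one has $\log|I|^{-1}\le\log(1/\eta)$ while $\log|B|^{-1}\ge\log(1/\delta)$ (from property~(ii) of the contour and continuity), so $\log|I|^{-1}\le C\log|B|^{-1}$ on $\Gamma$ with $C=\log\eta/\log\delta$. The maximum principle then gives $\log|I|^{-1}\le C\log|B|^{-1}$ throughout $\Omega$, hence $1-|I|\le C_1(1-|B|)$ there; dividing by $1-r$ along the radius and letting $r\to1$ finishes the proof.

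Your ``cleaner packaging'' brushes against this but misfires at the decisive step: you write ``note $v\le C(\eta)\,u$ on $\D\setminus\Int\Gamma$'' as if it were a consequence of the hypothesis $|I|\ge\eta$. It is not --- that hypothesis only yields the uniform bound $v\le\log(1/\eta)$, not a bound by a multiple of $u$, which tends to~$0$ near $\partial\D$. The inequality $v\le Cu$ on $\Omega$ is exactly the content of the lemma, and it is the maximum principle on $\Omega$ (comparing boundary data on $\partial\Omega=\partial\D\cup\Gamma$) that delivers it. Once you see this, all the Carleson-measure bookkeeping about radial excursions into $\Int\Gamma$ --- the point you flagged as ``the real obstacle'' --- becomes unnecessary: the assumption $\lim_{r\to1}B(re^{i\theta})\in\partial\D$ already forces $|B(re^{i\theta})|>\delta$ for $r$ close to~$1$, so the radius eventually lies entirely in $\Omega$ and the comparison applies directly.
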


\begin{proof}
Consider the domain $\Omega = \D \setminus \Int\Gamma $, where $\Int\Gamma = \cup \Int\Gamma_j$. The functions $\log|I|^{-1}$ and $\log|B|^{-1}$ are harmonic on $\Omega$     and they both vanish on $\partial\D$. By hypothesis there exists a constant $C=C(\varepsilon,\eta)>0$ such that $\log|I|^{-1}\leq C\log|B|^{-1}$ on $\Gamma$. Applying the maximum principle to these functions in the domain $\Omega $ one gets that
 $$\log\frac{1}{|I(z)|}\leq C\log\frac{1}{|B(z)|}$$
 when $z\in \Omega$. Then one deduces that there exists a constant $C_1>0$ such that $1-|I(re^{i\theta})|\leq C_1(1-|B(re^{i\theta})|)$ for all $re^{i\theta}\in\D\setminus\Int\Gamma$. Hence, dividing by $1-r$ in both sides and taking $r\rightarrow1$ the Lemma is proved.
\end{proof}

\medskip

\noindent Now one is ready to prove \textbf{(a)} and \textbf{(b)} in Theorem \ref{main}. Since the Nevanlinna-Pick problem (\ref{NP}) is scaled, one may apply Lemma \ref{scaled}  to see that there exists $\eta>0$ such that $\rho(z)\geq3/4$ when $|\Pi(z)|\geq1-\eta$.  Fixed $\gamma \in [0, 2 \pi)$, let $I_\gamma(z)$ be an extremal solution of (\ref{NP}), then $|I_\gamma(z)|\geq1/4$ if $|\Pi(z)|\geq1-\eta$.
Considering a $(1-\eta)$-Carleson contour of $\Pi$ one may apply Lemma \ref{lemma2} to the functions $I_\gamma$ and $\Pi$ to conclude that $I_\gamma$ has angular derivative at  almost every point $e^{i\theta} \in \partial \D$ and
\begin{equation}\label{prop1}
 |I'_\gamma(e^{i\theta})|\leq C|\Pi'(e^{i\theta})|.
\end{equation}
\noindent Now one may prove the first two paragraphs of Theorem
\ref{main}. In order to prove \textbf{(a)}, suppose that
$\Pi\in\Ha$. Then clearly (\ref{prop1}) implies that
$I_\gamma\in\Ha$ for any $\gamma\in[0,2\pi)$. To prove \textbf{(b)},
note that for every $\lambda>0$ one has that
 \begin{equation}\label{inclusion}
  \{e^{i\theta}\in\partial\D\,:\,MI'_\gamma(e^{i\theta})>\lambda\}\subseteq\{e^{i\theta}\in\partial\D\,:\,M\Pi' (e^{i\theta})>\lambda/C\}.
 \end{equation}
Since $\Pi$ is an exponential Blaschke product, one may apply Lemma
\ref{cimnic} to deduce that $M\Pi' \in \Lw (\partial \D)$ and
(\ref{inclusion}) and Lemma  \ref{cimnic} again, to conclude that $I_\gamma$ is an exponential
Blaschke product for any $\gamma\in[0,2\pi)$.

\medskip

\noindent In order to prove \textbf{(c)} of Theorem 1 one needs an
auxiliary result, which is analogue to the description of $\Ba$
given by Carleson stated as Lemma \ref{carleson}.

\begin{lma}\label{int_cond}
 Let $\{z_n\}$ be a Blaschke sequence in $\D$ and let $\Pi$ be the Blaschke product with zeros $\{z_n\}$.
  Fix $0 < \alpha <1$. The following conditions are equivalent:
 \begin{enumerate}[\bf (i)]
  \item $\operatornamewithlimits{\sum}\limits_n(1-|z_n|)^{1-\alpha}\log(1-|z_n|)^{-1}<\infty$.
  \item $\operatornamewithlimits{\int}\limits_\D\dfrac{\log|\Pi(z)|^{-1}}{(1-|z|)^{1+\alpha}}\log(1-|z|)^{-1}dA(z)<\infty$.
 \end{enumerate}
\end{lma}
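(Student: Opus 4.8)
The plan is to mimic the proof of Carleson's Lemma \ref{carleson}, keeping track of the extra logarithmic factor throughout. The starting point is the standard identity expressing $\log|\Pi(z)|^{-1}$ in terms of the Green's function: writing $g(z,z_n) = \log|(1-\overline{z_n}z)/(z-z_n)|$ for the Green's function of $\D$ with pole at $z_n$, we have $\log|\Pi(z)|^{-1} = \sum_n g(z,z_n)$. Both implications then reduce to comparing, for a fixed pole $w=z_n$, the quantity $(1-|w|)^{1-\alpha}\log(1-|w|)^{-1}$ with the weighted Green's integral
$$
J(w) = \int_\D \frac{g(z,w)}{(1-|z|)^{1+\alpha}}\log\frac{1}{1-|z|}\,dA(z),
$$
and showing $J(w) \asymp (1-|w|)^{1-\alpha}\log(1-|w|)^{-1}$ up to constants depending only on $\alpha$ (with the comparison valid for $|w|$ close enough to $1$; finitely many $z_n$ are harmless). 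Summing over $n$ and using Tonelli to interchange sum and integral then gives the equivalence of \textbf{(i)} and \textbf{(ii)}.

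First I would estimate $J(w)$. Split $\D$ into the Carleson-type region near $w$, say where $|z-w| \lesssim 1-|w|$, and its complement. On the near region, $g(z,w) \asymp \log((1-|w|)/|z-w|)$ roughly, the weight $(1-|z|)^{-1-\alpha}$ is comparable to $(1-|w|)^{-1-\alpha}$, the logarithmic factor is comparable to $\log(1-|w|)^{-1}$, and the area is comparable to $(1-|w|)^2$; integrating $\log$ of a ratio over a disc gives another factor $(1-|w|)^2$ only up to a constant, so the near region contributes $\asymp (1-|w|)^{1-\alpha}\log(1-|w|)^{-1}$. On the far region one uses the pointwise bound $g(z,w) \lesssim (1-|w|)(1-|z|)/|1-\overline{w}z|^2$ and integrates in polar-type coordinates; the key point is that the extra slowly varying factor $\log(1-|z|)^{-1}$ does not destroy convergence of the radial integral $\int_0^1 (1-r)^{-\alpha}\log(1-r)^{-1}\,dr$ (which converges since $\alpha<1$), and one checks the total is again dominated by $(1-|w|)^{1-\alpha}\log(1-|w|)^{-1}$, with a matching lower bound coming already from the near region.

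The main obstacle I expect is making the two-sided comparison $J(w)\asymp (1-|w|)^{1-\alpha}\log(1-|w|)^{-1}$ genuinely uniform in $w$: the logarithmic factor $\log(1-|z|)^{-1}$ is not comparable to $\log(1-|w|)^{-1}$ on the far region (it can be much larger, as $|z|\to 1$), so one cannot simply pull it out. The way around this is to note that on the far region $g(z,w)$ already carries a full factor $(1-|z|)$, so the relevant radial integrand is $(1-r)^{-\alpha}\log(1-r)^{-1}$, and its integral over $(0,1)$ is a finite constant $C(\alpha)$ — the $\log$ is absorbed harmlessly. A secondary technical point is handling finitely many zeros $z_n$ with $|z_n|$ not close to $1$: for these $J(z_n)$ is bounded and $(1-|z_n|)^{1-\alpha}\log(1-|z_n|)^{-1}$ is bounded away from $0$ and $\infty$ (note $\log(1-|z_n|)^{-1}$ could be negative if $|z_n|$ is tiny, but there are only finitely many such terms so they affect neither convergence nor the statement). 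Once the uniform comparison is in hand, Tonelli's theorem finishes both directions simultaneously.
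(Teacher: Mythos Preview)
Your plan is correct but takes a different route from the paper. Instead of decomposing $\log|\Pi|^{-1}=\sum_n g(z,z_n)$ and estimating each weighted Green integral $J(w)$ by a near/far splitting, the paper applies Green's formula once to the finite product $\Pi_N$: the Laplacian is moved from $\log|\Pi_N|$ (producing $\sum\delta_{z_n}$) onto the weight $(1-|z|^2)^{1-\alpha}\log(1-|z|^2)$, whose Laplacian one computes directly as $C(\alpha)(1-|z|^2)^{-1-\alpha}\log(1-|z|^2)+O\bigl((1-|z|^2)^{-1-\alpha}\bigr)$. This yields an \emph{identity} between the sum in \textbf{(i)} and the integral in \textbf{(ii)}, up to a lower-order remainder already controlled by Carleson's Lemma~\ref{carleson}; letting $N\to\infty$ then gives both implications simultaneously. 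The paper's argument is shorter and more conceptual but leans on Lemma~\ref{carleson} to absorb the error term, whereas your approach is self-contained and delivers the sharper pointwise statement $J(w)\asymp(1-|w|)^{1-\alpha}\log(1-|w|)^{-1}$. One caution about your far-region sketch: saying the radial integral is ``a finite constant $C(\alpha)$'' is too crude, since the Poisson-type factor $(1-|w|)/|1-\bar w z|^2$ is still present; after integrating over circles the honest bound is $(1-|w|)\int_0^1(1-r)^{-\alpha}(1-|w|r)^{-1}\log(1-r)^{-1}\,dr$, and this must itself be split at $1-r\sim 1-|w|$ to recover the required $(1-|w|)^{1-\alpha}\log(1-|w|)^{-1}$.
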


\begin{proof}
 Let $\Pi_N$ be the finite Blaschke product with zeros $\{z_n : n=1,\dots,N \}$. Then by Green's formula
 \begin{multline*}
  \int_\D(1-|z|^2)^{1-\alpha}\log(1-|z|^2)\Lap\log|\Pi_N(z)|dA(z)=\\=\int_\D\Lap\left((1-|z|^2)^{1-\alpha}\log(1-|z|^2)\right)\log|\Pi_N(z)|dA(z).
 \end{multline*}
 \noindent Denoting by $\delta_{z_n}$ the Dirac measure at the point $z_n$, one may check that $\Lap\log|\Pi_N(z)|=C\operatornamewithlimits{\sum}\limits_{n=1}^N\delta_{z_n}$, in the sense of distributions. Besides
 \begin{multline*}
  \Lap\left((1-|z|^2)^{1-\alpha}\log(1-|z|^2)\right)=C(\alpha)(1-|z|^2)^{-1-\alpha}\log(1-|z|^2)+\\+O\left((1-|z|^2)^{{-1-\alpha}} \right).
 \end{multline*}
 \noindent Hence,
 \begin{multline*}
  C \sum_{n=1}^N  (1-|z_n|^2)^{1-\alpha}\log(1-|z_n|^2) =\\=C(\alpha)\int_\D\dfrac{\log|\Pi_N(z)|}{(1-|z|^2)^{1+\alpha}}\log(1-|z|^2)dA(z)+\\+O(1)\int_\D\dfrac{\log|\Pi_N(z)|}{(1-|z|^2)^{1+\alpha}}dA(z).
 \end{multline*}

\noindent The proof finishes by letting $N \to \infty$.

\end{proof}

\medskip

\noindent Return now to the scaled Nevalinna-Pick interpolation
problem (\ref{NP}) in order to prove statement \textbf{(c)} of Theorem 1.  Let
$I_\gamma$ be the corresponding extremal solutions of the problem. We follow an idea of Stray in \cite{Str88}.
Consider the set
$E=\{e^{i\gamma}\in\partial\D\,:\,I_\gamma\notin\Ba\}$. One wants to
see that $E$ has logarithmic capacity zero. So, given a positive
measure $\mu$ supported on $E$ with bounded logarithmic potential,
that is,
$$K=\sup_{z\in\C}\int_E\log|z-w|^{-1}d\mu(w)<\infty,$$
one needs to show that $\mu\equiv0$. To this end, Lemma \ref{carleson} tells that it is enough to see that
$$\int_E\int_\D\frac{\log|I_\gamma(z)|^{-1}}{(1-|z|)^{1+\alpha}}dA(z)d\mu(\gamma)<\infty.$$
By Fubini this is equivalent to show that
\begin{equation}\label{integral}
 \int_\D\left(\int_E\log|I_\gamma(z)|^{-1}d\mu(\gamma)\right)\frac{dA(z)}{(1-|z|)^{1+\alpha}}<\infty.
\end{equation}
\noindent As in paragraphs \textbf{(a)} and \textbf{(b)}, by (ii) of
Lemma D one can pick $\eta
>0$ small enough such that the $(1-\eta)$-Carleson contour $\Gamma$ of
$\Pi$ satisfies that $\rho(z) > 3/4$ for any $z \in
\D\setminus\Int\Gamma $. Hence for every $\gamma\in[0,2\pi)$ one has
that $|I_\gamma(z)|\geq1/4$ for $ z \in \D\setminus\Int\Gamma$.
Split the integral in (\ref{integral}) over the disc into two parts,
one over the set $\Int\Gamma$ and the other one over
$\D\setminus\Int\Gamma$. Since for any $\gamma \in [0 , 2\pi]$, the maximum principle gives that  $\log|I_\gamma(z)|^{-1}\leq C(\eta)\log|\Pi(z)|^{-1}$ for $ z
\in \D\setminus\Int\Gamma$, one deduces
\begin{multline*}
 \int_{\D\setminus\Int\Gamma}\left(\int_E\log|I_\gamma(z)|^{-1}d\mu(\gamma)\right)\frac{dA(z)}{(1-|z|)^{1+\alpha}}\leq \\ \leq C(\eta) \mu(E)\int_{\D\setminus\Int\Gamma}\frac{\log|\Pi(z)|^{-1}}{(1-|z|)^{1+\alpha}}dA(z)<\infty
\end{multline*}
\noindent by Lemma \ref{carleson}. Focused now on the other part of the integral, the fact that the logarithmic potential
of $\mu$ is bounded yields
$$\int_E\log |I_\gamma(z)|^{-1}d\mu(\gamma)\leq\log\frac{1}{\max\left\{|(P/R)(z)|,|(Q/R)(z)| \right\}}+2K + \log 2.$$
Since $S/R\in\U$ and $(Q/R)(S/R)-P/R=\Pi/R^2$ one has that
$$\max\left\{|(P/R)(z)|,|(Q/R)(z)| \right\} \geq |\Pi (z)|/
2|R(z)|^2$$
Hence
\begin{multline*}
 \int_{\Int\Gamma}\left(\int_E\log|I_\gamma(z)|^{-1}d\mu(\gamma)\right)\frac{dA(z)}{(1-|z|)^{1+\alpha}}\leq \\ \leq\int_{\Int\Gamma}\frac{\log|\Pi(z)|^{-1}}{(1-|z|)^{1+\alpha}}dA(z) +\int_{\Int\Gamma}\frac{\log|R(z)|^2}{(1-|z|)^{1+\alpha}}dA(z) + \\ +\int_{\Int\Gamma}\frac{\log4+2K}{(1-|z|)^{1+\alpha}}dA(z)=(A_1)+(A_2)+(A_3).
\end{multline*}
\noindent The first integral $(A_1)$ is finite again by Lemma \ref{carleson}. Lemma \ref{scaled} implies that the function $R\in H^2$, and then $\log|R(z)|\leq C(1+\log(1-|z|)^{-1})$. Hence
$$(A_2)+(A_3)\leq C\int_{\Int\Gamma}\frac{|\log(1-|z|)|}{(1-|z|)^{1+\alpha}}dA(z)+\int_{\Int\Gamma}\frac{C(K)}{(1-|z|)^{1+\alpha}}dA(z).$$
Since $\Gamma$ is a $(1-\eta)$-Carleson contour of $\Pi$, by Lemma
\ref{contour} there exists $0< \delta < 1$ depending on $\eta$ such
that $|\Pi(z)|\leq\delta$ on $\Int\Gamma$. Then,
$$(A_2)+(A_3) \leq C(\delta)\int_{\Int\Gamma}\frac{\log|\Pi(z)|^{-1}}{(1-|z|)^{1+\alpha}}|\log(1-|z|)|dA(z),$$
which is finite according to Lemma \ref{int_cond}. Then the integral
(\ref{integral}) is finite, and this finishes the proof of Theorem 1.$\hfill\qed$

\bigskip

\begin{obs}
 In the last part of the proof it is shown actually that if the sequence $\{z_n\}$ satisfies condition (\ref{extra}) then
 $$\int_{\Int\Gamma}\frac{|\log(1-|z|)|}{(1-|z|)^{1+\alpha}}dA(z)<\infty,$$
 where $\Gamma$ is a Carleson contour of $\Pi$. This is a direct consequence of Lemma \ref{int_cond}, and it
 is an analogue to a condition introduced by Cohn in \cite{Coh83} for describing inner functions in  $\Ha$ for $\alpha\in(1/2,1)$.
\end{obs}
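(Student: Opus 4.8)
The plan is to read the estimate off Lemma \ref{int_cond}, as the Remark itself indicates. First I would observe that condition (\ref{extra}) is literally condition (i) of Lemma \ref{int_cond}: since $0<1-|z_n|<1$ one has $|\log(1-|z_n|)|=\log(1-|z_n|)^{-1}$. Hence, assuming (\ref{extra}), Lemma \ref{int_cond} yields condition (ii), that is,
\[
\int_\D\frac{\log|\Pi(z)|^{-1}}{(1-|z|)^{1+\alpha}}\,|\log(1-|z|)|\,dA(z)<\infty ,
\]
where again one uses that $\log(1-|z|)^{-1}=|\log(1-|z|)|$ for every $z\in\D$.

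Next I would bring in the defining property of the contour. If $\Gamma$ is an $\varepsilon$-Carleson contour of $\Pi$ for some $\varepsilon\in(0,1)$, then by the description of Carleson contours (Theorem \ref{contour}) there is $\delta=\delta(\varepsilon)\in(0,1)$ with $|\Pi(z)|\le\delta$ for all $z\in\Int\Gamma$, so $\log|\Pi(z)|^{-1}\ge\log(1/\delta)>0$ on $\Int\Gamma$. Consequently, at every $z\in\Int\Gamma$,
\[
\frac{|\log(1-|z|)|}{(1-|z|)^{1+\alpha}}\le\frac{1}{\log(1/\delta)}\cdot\frac{\log|\Pi(z)|^{-1}}{(1-|z|)^{1+\alpha}}\,|\log(1-|z|)| .
\]
Integrating over $\Int\Gamma$ and then enlarging the domain of integration to all of $\D$ — legitimate because the right-hand integrand is nonnegative — the finiteness of $\int_{\Int\Gamma}|\log(1-|z|)|(1-|z|)^{-1-\alpha}\,dA(z)$ follows at once from the displayed bound of the first step.

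The deduction is immediate, so there is no genuine obstacle; the one point that deserves care is that the constant $\delta$ attached to an $\varepsilon$-Carleson contour is \emph{strictly} less than $1$, which is exactly what makes $\log(1/\delta)>0$ and the displayed pointwise inequality valid — this is where the choice of $\varepsilon$ (equivalently of the $\eta$ used earlier in the proof of Theorem \ref{main}) enters. Note that only the lower bound $\log|\Pi(z)|^{-1}\ge\log(1/\delta)$ on $\Int\Gamma$ is used; the quantity $\log|\Pi|^{-1}$ is unbounded near the zeros of $\Pi$ lying inside $\Gamma$, but it appears with a favourable sign and so does no harm. As the Remark points out, the upshot is that the geometric hypothesis (\ref{extra}) on the nodes forces convergence of this Cohn-type contour integral, which is precisely the bridge exploited in the proof of part \textbf{(c)} of Theorem \ref{main}.
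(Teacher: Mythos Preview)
Your argument is correct and matches the paper's own reasoning exactly: the paper uses that $|\Pi(z)|\le\delta<1$ on $\Int\Gamma$ to bound the integrand by $C(\delta)\,\log|\Pi(z)|^{-1}\,|\log(1-|z|)|/(1-|z|)^{1+\alpha}$ and then invokes Lemma~\ref{int_cond}. The only cosmetic difference is that the paper folds this step into the estimate of $(A_2)+(A_3)$ rather than isolating it as a standalone inequality.
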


\medskip

\begin{obs}
 One may prove a more general result than \textbf{(c)} of Theorem 1.  Let $h:[0,1]\longrightarrow[0,1]$  be
  a twice differentiable non increasing  concave function such that $h(1)=0$. We say that a Blaschke product $B$ is in the class  $\Bh$ if
 $$
 \sum_{z: B(z)=0} h(|z|^2) < \infty
 $$
 When $h(t)=(1-t)^{1- \alpha}$, $0< \alpha < 1$ the class $\Bh$ is $\Ba$. Assume that
  $\operatornamewithlimits{\limsup}\limits_{r\rightarrow1}|h'(r)/h''(r)|<1$.
 The proof of Lemma 5 shows that a Blaschke product $B$ is in $\Bh$ if and only if
 $$
 \int_\D  h''(|z|^2) \log|B(z)|^{-1}dA(z)<\infty.
 $$
 Similarly, the zeros of a Blaschke product $B$ satisfy
 $$
 \sum_{z: B(z)=0} h(|z|^2) | \log (1-|z|^2)| < \infty
 $$
if and only if
$$
 \int_\D\log|B(z)|^{-1}h''(|z|^2)|\log(1-|z|^2)|dA(z)<\infty.
$$
Now, following the proof of part \textbf{(c)} of Theorem 1, one obtains the following result.
 \begin{thm}
  Let $h:[0,1]\longrightarrow[0,1]$  be
  a twice differentiable non increasing concave function such that $h(1)=0$.
  Assume that
$$\operatornamewithlimits{\limsup}\limits_{r\rightarrow1}|h'(r)/h''(r)|<1 .$$
Let (\ref{NP}) be a scaled Nevanlinna-Pick problem and let $\Pi$ be
the Blaschke product with zeros $\{z_n\}$. Let $I_\gamma$, with
$\gamma\in[0,2\pi)$, be the extremal solutions of (\ref{NP}). Assume
that
\begin{equation}\label{log_condition}
  \sum_n h(|z_n|^2)|\log(1-|z_n|^2)|<\infty
 \end{equation}
 then $I_\gamma\in\Bh$ a.e.(C) $\gamma\in[0,2\pi)$.
\end{thm}

\end{obs}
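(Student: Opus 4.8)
The plan is to mimic the proof of part \textbf{(c)} of Theorem \ref{main} almost verbatim, replacing Lemma \ref{carleson} by the first displayed equivalence in this Remark and Lemma \ref{int_cond} by the second one. First I would record the two integral criteria: a Blaschke product $B$ is in $\Bh$ if and only if $\int_\D h''(|z|^2)\log|B(z)|^{-1}dA(z)<\infty$, and the zeros of $B$ satisfy $\sum_{B(z)=0}h(|z|^2)|\log(1-|z|^2)|<\infty$ if and only if $\int_\D\log|B(z)|^{-1}h''(|z|^2)|\log(1-|z|^2)|dA(z)<\infty$. Both come from Green's formula applied to $\log|\Pi_N|$ against the test functions $h(|z|^2)$ and $h(|z|^2)|\log(1-|z|^2)|$, using that the concavity of $h$ forces $h''\le 0$ and that the hypothesis $\limsup_{r\to1}|h'(r)/h''(r)|<1$ makes the lower-order terms in the Laplacians absorbable into the main term (exactly as the $O\big((1-|z|^2)^{-1-\alpha}\big)$ term was absorbed in Lemma \ref{int_cond}). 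I would then follow Stray's capacity argument: set $E=\{e^{i\gamma}:I_\gamma\notin\Bh\}$, take a positive measure $\mu$ on $E$ with bounded logarithmic potential $K$, and show via the $\Bh$-criterion and Fubini that it suffices to bound
\begin{equation*}
 \int_\D\Big(\int_E\log|I_\gamma(z)|^{-1}d\mu(\gamma)\Big)h''(|z|^2)\,dA(z).
\end{equation*}

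Next I would reuse the geometry: since the problem is scaled, Lemma \ref{scaled}(ii) lets me fix $\eta>0$ so that the $(1-\eta)$-Carleson contour $\Gamma$ of $\Pi$ satisfies $\rho(z)>3/4$, hence $|I_\gamma(z)|\ge 1/4$ on $\D\setminus\Int\Gamma$. Split the integral over $\D\setminus\Int\Gamma$ and over $\Int\Gamma$. On the outside region the maximum principle gives $\log|I_\gamma(z)|^{-1}\le C(\eta)\log|\Pi(z)|^{-1}$, so that part is dominated by $C(\eta)\mu(E)\int_\D |h''(|z|^2)|\log|\Pi(z)|^{-1}dA(z)$, finite by the $\Bh$-criterion since $\Pi\in\Bh$ (which follows from \eqref{log_condition}). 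On $\Int\Gamma$ I would use, exactly as in the proof of Theorem \ref{main}(c), that $\int_E\log|I_\gamma(z)|^{-1}d\mu(\gamma)\le \log|R(z)|^2-\log|\Pi(z)|+\log 4+2K$, together with $\log|R(z)|\le C(1+|\log(1-|z|^2)|)$ from Lemma \ref{scaled}(iii), and the bound $|\Pi(z)|\le\delta<1$ on $\Int\Gamma$ from Lemma \ref{contour}, to dominate everything by
\begin{equation*}
 C(\delta,K)\int_{\Int\Gamma}|h''(|z|^2)|\,\log|\Pi(z)|^{-1}\,|\log(1-|z|^2)|\,dA(z),
\end{equation*}
which is finite by the second integral criterion of the Remark applied to $\Pi$ under hypothesis \eqref{log_condition}. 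This shows $\mu\equiv0$, hence $E$ has logarithmic capacity zero, which is the claim.

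The only genuinely new ingredient compared with Theorem \ref{main}(c) is establishing the two Green's-formula equivalences in the degree of generality stated, and the main obstacle is controlling the error terms in $\Lap\big(h(|z|^2)\big)$ and $\Lap\big(h(|z|^2)|\log(1-|z|^2)|\big)$. Writing $\psi(r)=h(r^2)$ and computing in polar coordinates, $\Lap\psi$ has a leading term proportional to $h''(|z|^2)$ plus terms of order $h'(|z|^2)$ (times bounded factors); the condition $\limsup_{r\to1}|h'(r)/h''(r)|<1$ is precisely what guarantees that, near the boundary, $|h'|$ is comparably smaller than $|h''|$ so the lower-order contribution can be absorbed into a constant multiple of the main term with the correct sign, just as the $O\big((1-|z|^2)^{-1-\alpha}\big)$ term was handled in Lemma \ref{int_cond}. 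Away from the boundary all quantities are bounded and contribute finite constants (the Blaschke condition $\sum(1-|z_n|^2)<\infty$ together with $h(t)\asymp 1-t$ near $t=1$ — a consequence of concavity and $h(1)=0$ — ensures $\Pi\in\Bh$ makes sense and the sums converge). I would also remark that the concavity hypothesis is used only to get $h''\le 0$ and the comparability $h(t)\asymp(1-t)$ near $1$, so the statement is robust; the rest of the argument is identical to the proof already given for Theorem \ref{main}(c), with $(1-|z|)^{-1-\alpha}dA(z)$ systematically replaced by $|h''(|z|^2)|dA(z)$.
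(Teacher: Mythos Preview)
Your proposal is correct and follows exactly the route the paper indicates (the paper gives no proof beyond ``following the proof of part \textbf{(c)} of Theorem 1''), including the replacement of Lemma \ref{carleson} and Lemma \ref{int_cond} by the two Green's-formula equivalences, the Stray capacity set-up, the Carleson-contour splitting, and the estimate of $\log|R|$ via Lemma \ref{scaled}(iii). One small slip: your parenthetical claim that concavity together with $h(1)=0$ forces $h(t)\asymp 1-t$ near $t=1$ is false in general---for the model case $h(t)=(1-t)^{1-\alpha}$ one has $h(t)/(1-t)\to\infty$---so the Blaschke condition alone does not give $\Pi\in\Bh$; this is harmless, since, as you also note, $\Pi\in\Bh$ follows directly from the hypothesis \eqref{log_condition} (only finitely many zeros satisfy $|\log(1-|z_n|^2)|<1$).
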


\bigskip

\section{Proof of Theorem \ref{thm2}.}

\noindent Consider now $H^\infty$ as a subalgebra of the space $L^\infty (\partial
\D)$. Let $D_\Pi$ be the Douglas algebra generated by $H^\infty$ and
the restriction of $\overline{\Pi}$ to $\partial \D$. Let $I$ be an
inner function. The property of $I$ being invertible in $D_\Pi$
means that $|I(\xi_n)| \to 1 $ whenever $\{\xi_n \}$ is a sequence
satisfying $|\Pi (\xi_n)| \to 1$. Let $CDA_\Pi$ be the subalgebra
of $H^\infty$ generated by all inner functions $I$ invertible in
$D_\Pi$. Consider again a scaled Nevanlinna-Pick problem (\ref{NP}) and let $\Pi$ be the Blaschke product
with zeros $\{z_n\}$. In \cite{Tol93} it was shown that all the
extremal solutions of (\ref{NP}) are invertible in $D_\Pi$.
Furthermore, recalling Nevalinna's parametrization in
(\ref{parametrization}), in \cite{NicStr96} it was proved that the functions $P/R$,
$Q/R$, $S/R$ and $1/R$ belong also to $CDA_\Pi$.

\medskip

\noindent For proving paragraph \textbf{(a)} of Theorem \ref{thm2}, suppose that $\Pi\in\Ha$. It is well known that $Q/R$ is an inner function. Observe that
 \begin{multline*}
  \frac{(Q/R)(z)-(P/R)(z)\cdot(S/R)(z)}{1-\left|(S/R)(z)\right|^2}-(Q/R)(z)= \\ =(S/R)(z)\cdot\frac{(Q/R)(z)\cdot\overline{(S/R)(z)}-(P/R)(z)}{1-\left|(S/R)(z)\right|^2}
 \end{multline*}
\noindent Since the problem is scaled, Lemma \ref{scaled} gives that $c(z)\rightarrow0$ and $\rho(z)\rightarrow1$ when $|\Pi(z)|\rightarrow1$.  Hence if $\Gamma$ is an $(1- \varepsilon)$-Carleson contour of $\Pi$ for sufficiently small $0< \varepsilon < 1$, then this shows that there exists an $0< \eta < 1$ such that $|Q(z) / R (z)|>1-\eta$ when $z\in\D\setminus\Int\Gamma$. One may apply here Lemma \ref{lemma2} to conclude that $Q/R\in\Ha$.

\medskip

\noindent In order to see that the function $P/R$ is in $\Ual$, fix
$z\in\D$ and consider the following integral
$$\frac{1}{2\pi}\int_0^{2\pi}I_\gamma(z)d\gamma=\frac{1}{2\pi}\int_0^{2\pi}\frac{P(z)-Q(z)e^{i\gamma}}{R(z)-S(z)e^{i\gamma}}d\gamma,$$
where $I_\gamma$ are the extremal solutions of the problem. Consider
$I_w(z)=(P(z)-Q(z)w)/(R(z)-S(z)w)$. Since $|(S/R)(z)|<1$, one has
that $I_w(z)$ is analytic in $w\in\overline\D$. Then
$$(P/R)(z)=\frac{1}{2\pi}\int_0^{2\pi}\frac{(P/R)(z)-(Q/R)(z)e^{i\gamma}}{1-(S/R)(z)e^{i\gamma}} d \gamma.$$

\noindent Theorem \ref{main} tells that there exists a constant $C>0$ such that $\|I'_\gamma\|_\alpha\leq C\|\Pi'\|_\alpha$. Hence $(P/R) \in \Ual$.

\medskip

\noindent Let  $0<\delta<1$ be  a constant to be fixed later. The function $\tilde I_w(z)=(\delta(S/R)(z)+(Q/R)(z)w)/(1+\delta(P/R)(z)w)$ is analytic in $w\in\overline\D$. Consequently
$$\delta(S/R)(z)=\frac{1}{2\pi}\int_0^{2\pi}\frac{\delta(S/R)(z)+(Q/R)(z)e^{i\gamma}}{1+ \delta(P/R)(z)e^{i\gamma}}d\gamma.$$
Observe that $\tilde I_w(z)$ is also analytic and uniformly bounded in $z\in\D$. In order to see that $\tilde I_w$ are inner functions in $z$ for any $w \in \D$, recall that $P\overline R-Q\overline S=0$ a.e. on $\partial\D$. Then for $|w|=1$ one has that
$$|\tilde I_w(e^{i\theta})|=\left|\frac{\delta (S/R)(e^{i\theta})+ (Q/R)(e^{i\theta})w}{1+\delta\overline{(S/R)}(e^{i\theta})Q/R(e^{i\theta})w}\right|=1\,\,\text{a.e. }\theta\in[0,2\pi).$$
Take $\varepsilon>0$ and let $\Gamma$ be an $\varepsilon$-Carleson contour of $(Q/R)$. Choosing $0< \delta<\varepsilon$ one has that $|\tilde I_w(z)|\geq (\varepsilon-\delta)/2$ when $z\in\D\setminus\Int\Gamma$. Then Lemma \ref{lemma2} yields that $\tilde I_w\in\Ha$ for all $w\in\partial\D$. As before, this shows that $(S/R)\in\Ual$.

\medskip

\noindent Consider now the function $(\delta(1/R)(z)+(Q/R)(z)w)/(1-\delta(\Pi/R)(z)w)$, for a certain $0<\delta<1$ to be fixed, which is analytic in $w\in\overline\D$. A similar argument shows that for any $w \in \partial \D$, this function is inner in the variable $z$. This holds because $-(\Pi/R)=(Q/R)(1/\overline R)$ a.e. on $\partial\D$. Since
$$\delta(1/R)(z)=\frac{1}{2\pi}\int_0^{2\pi}\frac{\delta(1/R)(z)+(Q/R)(z)e^{i\gamma}}{1-\delta(\Pi/R)(z)e^{i\gamma}}d\gamma,$$
one may conclude that $(1/R)\in\Ual$.

\medskip

\noindent The proof of paragraph \textbf{(b)}  is an easy adaptation of the arguments above, so it will not be included here.$\hfill\qed$

\bigskip

\bibliographystyle{plain}
\bibliography{classes_of_Bp}

\end{document}